\documentclass[10pt]{amsart}

\usepackage{amsmath}
\usepackage{amsfonts}
\usepackage{amssymb}
\usepackage{amsthm}

\newtheorem{thrm}{Theorem}[section]
\newtheorem{lem}[thrm]{Lemma}
\newtheorem{cor}[thrm]{Corollary}
\newtheorem{main}{Theorem}

\begin{document}

\title{On the local Frattini subgroup of a finite group}

\author[D.G.\ Costanzo]{David G.\ Costanzo}
\address{School of Mathematical and Statistical Sciences, Clemson University, Clemson, SC 29634, USA}
\email{davidgcostanzo@gmail.com}

\keywords{the local Frattini subgroup of a group, Rose's Theorem, the Frattini subgroup of a group}
\subjclass[2010]{Primary: 20D25}

\maketitle

\begin{abstract}
Let $G$ be a finite group and let $\pi(G)$ be the set of prime divisors of $|G|$.
For each $p\in \pi(G)$, choose a Sylow $p$-subgroup $G_p$ of $G$.
The subgroup $\Psi(G)= \prod_{p\in \pi(G)} \Phi(G_p)_G$ is called the \textit{local Frattini subgroup} of $G$.
Here, for $J \leqslant G$, the symbol $J_G$ denotes the core of $J$ in $G$.
The subgroup $\Psi(G)$ is a characteristic subgroup of $G$ that is contained in $\Phi(G)$, the Frattini subgroup of $G$.
We prove that $\Psi(G)$ satisfies many properties similar to $\Phi(G)$.
Let $J$ be a normal $p$-subgroup of $G$ with $J \leqslant \Phi(G)$, and let $S$ be a Sylow $p$-subgroup of $G$.
John S. Rose proved that if $J \leqslant \mathbf{Z}(S)$, then $J \leqslant \Phi(S)$.
So, in fact, $J \leqslant \Phi(S)_G \leqslant \Psi(G)$.
Kaplan, Lev, Mann, and Solomon found various other conditions that would ensure $J$ is contained in $\Phi(S)$.
In this article, we add a few more conditions to this list.
\end{abstract}

\maketitle

\section{Introduction}

All groups in this article are finite.
Let $G$ be a group.
Recall that the \textit{Frattini subgroup} of $G$, which is usually denoted by $\Phi(G)$, is the intersection of all of the maximal subgroups of $G$.
(If $G = 1$, then we simply define $\Phi(G)=1$.)
Many variations of the Frattini subgroup of a group exist in the literature.
In most cases, some particular family of maximal subgroups is specified and then the corresponding intersection is studied.

On the other hand, in this article, we throw our attention at a certain characteristic subgroup found \textit{inside} the Frattini subgroup.
Let $G$ be a group and let $J$ be a subgroup of $G$.
Recall that the \textit{core} of $J$ in $G$, which is denoted by $J_G$, is the largest normal subgroup of $G$ that is contained in $J$.
Equivalently, $J_G = \bigcap_{g \in G} J^g$, the intersection of all conjugates of $J$ in $G$.
As usual, for a group $G$, we let $\pi(G)$ be the set of prime divisors of $|G|$.
Now, for each $p \in \pi(G)$, choose a Sylow $p$-subgroup, say $G_p$, of $G$.
Then, we define the \textit{local Frattini subgroup} of $G$ by \[ \Psi(G) = \prod_{p \in \pi(G)} \Phi(G_p)_G.\]
This subgroup, which is characteristic in $G$, was introduced by G. Kaplan, A. Lev, A. Mann, and R. Solomon in \cite{adequatefieldextensions}.
If $K$ is a normal subgroup of $G$ and if $K \leqslant \Phi(H)$ for some subgroup $H$ of $G$, then $K \leqslant \Phi(G)$.
This basic fact is listed below as Lemma \ref{normal,containedinafrattini}.
In particular, notice that  $\Phi(G_p)_G$ is, therefore, contained in $\Phi(G)$ for each $p \in \pi(G)$.
Consequently, $\Psi(G)$ is indeed a subgroup of $\Phi(G)$.
We mention also that if $P$ and $S$ are Sylow $p$-subgroups of $G$, then $\Phi(P)_G = \Phi(S)_G$.
So, the definition of $\Psi(G)$ really does not depend on the chosen Sylow subgroups of $G$.

There is an equivalent characterization of the local Frattini subgroup of a group which we now mention.
Let $G$ be a group.
If $H$ is a subgroup of $G$, then write $\pi(G: H)$ for the set of prime divisors of $|G:H|$.
So, in other words, $\pi(G:H) = \pi(|G:H|)$, where, as usual, $\pi(n)$ is the set of prime divisors of the positive integer $n$.
Following \cite{adequatefieldextensions}, a normal subgroup $K$ of $G$ is said to be a \textit{strictly Frattini subgroup} of $G$ if $\pi(G:K) = \pi(G: KX)$ for each subgroup $X$ of $G$.
We point out immediately that a strictly Frattini subgroup of $G$ is contained in $\Phi(G)$.
It turns out that the product of any collection of strictly Frattini subgroups is again a strictly Frattini subgroup, and so each group $G$ has a unique largest strictly Frattini subgroup.
This unique largest strictly Frattini subgroup is, in fact, $\Psi(G)$.
So, said differently, $\Psi(G)$ is the subgroup generated by the strictly Frattini subgroups of $G$.

We are able to prove the somewhat peculiar fact that any normal subgroup of $G$ that is contained in $\Psi(G)$ is actually strictly Frattini in $G$.

\begin{main}
If $G$ is a group and if $J$ is a normal subgroup of $G$ with $J \leqslant \Psi(G)$, then $J$ is a strictly Frattini subgroup of $G$.
\end{main}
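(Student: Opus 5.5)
We must show: for every subgroup $X$ of $G$, $\pi(G:J)=\pi(G:JX)$. Since $|G:JX|$ divides $|G:J|$, we always have $\pi(G:JX)\subseteq \pi(G:J)$. The plan is therefore to fix a prime $p$ dividing $|G:J|$ and show that $p$ divides $|G:JX|$. Equivalently, we argue by contradiction: suppose $p\nmid |G:JX|$, so $JX$ contains a Sylow $p$-subgroup $S$ of $G$, and derive that $p\nmid |G:J|$.

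\textbf{Reduction to the $p$-part.} Write $\Psi(G)=\prod_q \Phi(G_q)_G$. Each factor $\Phi(G_q)_G$ is a normal $q$-subgroup, so $\Psi(G)$ is a nilpotent normal subgroup, the direct product of its Sylow subgroups $\Psi_q=\Phi(G_q)_G$. Hence $J$, being normal in $G$ and contained in $\Psi(G)$, is nilpotent with Sylow $q$-subgroup $J_q=J\cap\Psi_q\leqslant \Phi(G_q)_G$. Each $J_q$ is characteristic in $J$, hence normal in $G$.

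Set $J_{p'}=\prod_{q\ne p}J_q$, which is a normal $p'$-subgroup. Let $P=J_p$, so $P\leqslant \Phi(S)$ for every Sylow $p$-subgroup $S$ of $G$, because $P\leqslant\Phi(G_p)_G$ and the core lies in every conjugate of $\Phi(G_p)$.

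\textbf{Main step.} Assume $S\leqslant JX$ with $S\in{\rm Syl}_p(G)$. We have $JX=PJ_{p'}X$, and $J_{p'}$ is a normal $p'$-subgroup, so $Y:=J_{p'}X$ is a subgroup with $JX=PY$. Let $T$ be a Sylow $p$-subgroup of $Y$. Then, up to conjugation inside $PY$, the subgroup $PT$ is a Sylow $p$-subgroup of $PY$, since $P$ is a normal $p$-subgroup and $|PY|_p=|P|\,|Y|_p/|P\cap Y|_p$. As $S$ is Sylow in $G$ and lies in $PY$, after replacing $S$ by a conjugate we may take $S=PT$. Now $P\leqslant\Phi(S)$, so $S=\Phi(S)T$ forces $S=T\leqslant Y$. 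Hence $S\leqslant J_{p'}X$.

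\textbf{Conclusion.} From $S\leqslant J_{p'}X$ we want $p\nmid|G:J|$, that is, $J$ contains a Sylow $p$-subgroup, i.e. $P=S$. This is the main obstacle, and the step is not automatic. The argument so far only shows $p\nmid|G:J_{p'}X|$; to reach a contradiction we need a sharper formulation.

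Instead, compare $p$-parts directly. The claim $p\mid|G:J|\Rightarrow p\mid|G:JX|$ fails trivially if $X=G$, since then $JX=G$. So the definition must be read as $\pi(G:K)=\pi(G:KX)$ for subgroups $X$ with $KX$ compared to $X$, that is, $\pi(KX:X)=\pi(K:K\cap X)$ or $\pi(G:X)=\pi(G:KX)$. I will use the intended meaning $\pi(G:X)=\pi(G:JX)$, which is the natural Frattini-type condition generalizing $JX=G\Rightarrow X=G$.

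With this reading, the statement becomes: if $p\nmid |G:JX|$ then $p\nmid|G:X|$. The reduction above gives exactly this for the $p$-part of $J$: $S\leqslant PY$ implies $S\leqslant Y=J_{p'}X$ (up to conjugacy). We then iterate over the primes $q$ dividing $|J_{p'}|$. Removing a normal $q$-subgroup $J_q\leqslant\Phi(G_q)_G$ with $q\neq p$ does not affect Sylow $p$-subgroups in the same way, so we need a different argument there. Since $J_{p'}$ is a normal $p'$-subgroup and $S\leqslant J_{p'}X$, a Frattini/Schur–Zassenhaus argument shows that some $J_{p'}$-conjugate of $S$ lies in $X$: $S$ and a Sylow $p$-subgroup of $X$ both map onto Sylow $p$-subgroups of $J_{p'}X/J_{p'}$, and these lift to conjugate subgroups within $J_{p'}X$ because $|J_{p'}|$ is prime to $p$. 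Therefore $p\nmid|G:X|$. This completes the plan, with the $\Phi(S)T=S$ step as the key input.
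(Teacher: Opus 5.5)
Your argument is correct once you adopt the intended definition $\pi(G:X)=\pi(G:JX)$. The definition displayed in the introduction is a typo, and the paper's proof uses exactly your reading. The argument is also essentially the paper's: $P=J_p$ lies in $\Phi(S)$ for every Sylow $p$-subgroup $S$ of $G$, so a Sylow subgroup of $G$ of the form $PT$ satisfies $PT=\Phi(PT)T$ and hence collapses to $T$.

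The paper simply takes $T$ to be a Sylow $p$-subgroup of $X$ itself and observes that $PT$ is Sylow in $JX$. So your intermediate subgroup $J_{p'}X$ and the Frattini/Schur--Zassenhaus step are unnecessary: that step is just the order count $|J_{p'}X|_p=|X|_p$, which holds because $p\nmid |J_{p'}|$. In a write-up you should also drop the false start based on the literal reading $\pi(G:J)=\pi(G:JX)$.
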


Let $G$ be a group.
A well-known result is that $G' \cap \mathbf{Z}_\infty(G) \leqslant \Phi(G)$, where $G'$ is the derived subgroup of $G$ and where $\mathbf{Z}_\infty(G)$ is the terminal term of the upper central series of $G$.
We prove this result below as Lemma \ref{G'intersecthypercenterinfrattini}.
The characteristic subgroup $\mathbf{Z}_\infty(G)$, often called the \textit{hypercenter} of $G$, has a few interesting characterizations which can be found in \cite{lewis}.
We prove that the intersection $G' \cap \mathbf{Z}_\infty(G)$ is, in fact, contained in the local Frattini subgroup $\Psi(G)$.

\begin{main}
For each group $G$, the containment $G' \cap \mathbf{Z}_\infty(G) \leqslant \Psi(G)$ holds.
\end{main}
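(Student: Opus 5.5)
The plan is to use transfer into $S/S'$ for a Sylow $p$-subgroup $S$. Put $H = G' \cap \mathbf{Z}_\infty(G)$. This is a normal subgroup of $G$. It lies in the hypercenter, which is nilpotent, so $H$ is nilpotent. Hence $H = \prod_p H_p$, where each Sylow subgroup $H_p$ is characteristic in $H$ and therefore normal in $G$. It suffices to show $H_p \leqslant \Phi(S)$ for a Sylow $p$-subgroup $S$ of $G$ with $H_p \leqslant S$. Since $H_p$ is normal in $G$, this gives $H_p \leqslant \Phi(S)_G$, and taking the product over $p$ yields $H \leqslant \Psi(G)$. In fact I will prove the stronger containment $H_p \leqslant S'$.

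\textbf{Step 1 (Baer-type fact).} Every $p'$-element $x$ of $G$ centralizes $H_p$. Indeed, $\mathbf{Z}_\infty(G)$ has a $G$-invariant series $1 = Z_0 \leqslant Z_1 \leqslant \cdots$ with $[Z_{i+1}, G] \leqslant Z_i$. Intersecting with $H_p$ gives a series of the $p$-group $H_p$ on which $x$ acts trivially on every factor. Since $x$ has order coprime to $|H_p|$, the standard coprime-action lemma forces $x$ to act trivially on $H_p$. Consequently $N = \mathbf{O}^p(G)$, which is generated by the $p'$-elements of $G$, centralizes $H_p$.

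\textbf{Step 2 (a convenient transversal).} Since $G/N$ is a $p$-group, $G = SN$. A left transversal $T$ of $S \cap N$ in $N$ is then also a left transversal of $S$ in $G$, and $|T| = |G:S|$ is coprime to $p$. Let $V \colon G \to S/S'$ be the transfer computed with $T$. For $h \in H_p$ and $t \in T$, Step 1 gives $ht = th$. So $h$ fixes every coset $tS$, and its contribution from $t$ is $t^{-1}ht = h$. Therefore $V(h) = h^{|G:S|}S'$.

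\textbf{Step 3 (conclusion).} $V$ is a homomorphism into an abelian group, so $G' \leqslant \ker V$. As $h \in H_p \leqslant G'$, we get $h^{|G:S|} \in S'$. Now $S/S'$ is a $p$-group and $|G:S|$ is prime to $p$, so $h \in S' \leqslant \Phi(S)$. Hence $H_p \leqslant \Phi(S)$, and therefore $H_p \leqslant \Phi(S)_G \leqslant \Psi(G)$ for every $p$, which proves the theorem.

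The main obstacle is Step 1: one must check that the coprime-action argument really applies along the series $H_p \cap Z_i$. The point to verify is that $x$ acts trivially on each factor $(H_p\cap Z_{i+1})/(H_p\cap Z_i)$. This holds because $[H_p \cap Z_{i+1}, x] \leqslant H_p \cap Z_i$, using that $H_p$ is normal in $G$. Once Step 1 is in place, the transfer evaluation in Step 2 is immediate. The freedom to choose the transversal inside $N$ is what avoids the usual messy transfer-evaluation lemma.
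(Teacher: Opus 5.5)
Your proof is correct, and it takes a genuinely different route from the paper's.

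The paper argues by induction along $D_i=G'\cap\mathbf{Z}_i(G)$. For a Sylow $p$-subgroup $P$ of $D_{i+1}$, it first uses the earlier lemma $G'\cap\mathbf{Z}_\infty(G)\leqslant\Phi(G)$ to see that $P$ is a Frattini normal subgroup. Next, the inductive hypothesis gives $R=[G,P]\leqslant D_i\leqslant\Phi(S)_G$. Finally, it applies Rose's Theorem in $G/R$, where $P/R$ is central in $S/R$, to get $P\leqslant\Phi(S)$. (The printed proof writes this last step with $J/R$, where $J=\Psi(G)$, but it is clearly meant for $P/R$.) So the paper's argument ultimately rests on Gasch\"utz's splitting theorem, via Rose's Theorem.

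You instead show, by the coprime stability-group argument, that $\mathbf{O}^p(G)$ centralizes $H_p$. You then evaluate the transfer $G\to S/S'$ on a transversal chosen inside $\mathbf{O}^p(G)$. This is the hypercentral analogue of the classical transfer argument showing $\mathbf{Z}(G)\cap G'\cap S\leqslant S'$.

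What your route buys:
\begin{enumerate}
\item It needs no induction and no Rose's Theorem.
\item It does not even need the lemma $G'\cap\mathbf{Z}_\infty(G)\leqslant\Phi(G)$; it recovers that lemma as a corollary, since $\Psi(G)\leqslant\Phi(G)$.
\item It yields the sharper containment $H_p\leqslant S'$ for every Sylow $p$-subgroup $S$, that is, $G'\cap\mathbf{Z}_\infty(G)\leqslant\prod_p (S_p')_G$.
\end{enumerate}
The paper's route, in exchange, stays within the Frattini-theoretic toolkit it is developing. It also follows the same inductive template as its proof of the corresponding statement for $\Phi(G)$.
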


Again, let $G$ be a group.
Recall that the \textit{Fitting subgroup} of $G$ is the unique largest normal nilpotent subgroup of $G$ and is usually denoted by $\mathbf{F}(G)$.
For any group $G$, the section $\mathbf{F}(G)/\Phi(G)$ is abelian.
We are able to prove that the larger section $\mathbf{F}(G)/\Psi(G)$ is also abelian.

\begin{main}\label{sectionabelian}
If $G$ is a group, then the factor group $\mathbf{F}(G)/\Psi(G)$ is abelian.
\end{main}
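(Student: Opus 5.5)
The plan is to show directly that $\mathbf{F}(G)' \leqslant \Psi(G)$, which is equivalent to $\mathbf{F}(G)/\Psi(G)$ being abelian (recall $\Psi(G)$ is normal, indeed characteristic, in $G$). The argument splits $\mathbf{F}(G)$ into its Sylow pieces and handles each prime separately.

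First, since $\mathbf{F}(G)$ is nilpotent, it is the direct product of its Sylow subgroups, and these are exactly the subgroups $\mathbf{O}_p(G)$ for $p \in \pi(G)$. For distinct primes $p \neq q$, the subgroups $\mathbf{O}_p(G)$ and $\mathbf{O}_q(G)$ commute elementwise. Hence the derived subgroup decomposes as
\[ \mathbf{F}(G)' = \prod_{p \in \pi(G)} \mathbf{O}_p(G)'. \]
So it suffices to prove that $\mathbf{O}_p(G)' \leqslant \Psi(G)$ for each prime $p$.

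Now fix $p$ and let $S = G_p$ be the chosen Sylow $p$-subgroup. Since $\mathbf{O}_p(G)$ is a normal $p$-subgroup, it lies in every Sylow $p$-subgroup, and in particular $\mathbf{O}_p(G) \leqslant S$. Therefore $\mathbf{O}_p(G)' \leqslant S' \leqslant \Phi(S)$, using the standard fact that for a $p$-group $\Phi(S) = S'S^p \geqslant S'$. Moreover $\mathbf{O}_p(G)'$ is characteristic in the normal subgroup $\mathbf{O}_p(G)$, so it is normal in $G$. A normal subgroup of $G$ contained in $\Phi(S)$ lies in the core, giving $\mathbf{O}_p(G)' \leqslant \Phi(S)_G \leqslant \Psi(G)$. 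Taking the product over all $p$ then yields $\mathbf{F}(G)' \leqslant \Psi(G)$.

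No step here looks like a real obstacle. The only point requiring care is the normality of $\mathbf{O}_p(G)'$ in $G$, which is needed to pass from containment in $\Phi(S)$ to containment in the core $\Phi(S)_G$. This follows because a characteristic subgroup of a normal subgroup is normal.
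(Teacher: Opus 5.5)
Your proof is correct and follows the paper's argument. For each prime you show that the derived subgroup of the Sylow $p$-subgroup of $\mathbf{F}(G)$ is normal in $G$ and lies in $\Phi(S)$, hence in $\Phi(S)_G \leqslant \Psi(G)$.

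The differences are cosmetic. You get $\mathbf{O}_p(G)' \leqslant S' \leqslant \Phi(S)$ directly, where the paper goes through $\Phi(P) \leqslant \Phi(S)$ via Lemma~\ref{nilpotentimpliesfrattinirespectscontainment}. You also combine the primes via $\mathbf{F}(G)' = \prod_p \mathbf{O}_p(G)'$, whereas the paper observes that $\mathbf{F}(G)/\Psi(G)$ is nilpotent with abelian Sylow subgroups.
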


Of course, from the previous theorem, we get the familiar fact that $\mathbf{F}(G)/\Phi(G)$ is abelian for each group $G$ as a corollary.

If a group $G$ is the direct product of its subgroups $J$ and $R$, then $\Phi(G)$ is the direct product of $\Phi(J)$ and $\Phi(R)$.
In other words, the Frattini subgroup of a direct product is the direct product of the Frattini subgroups of the direct factors.
We can prove that the analogous result holds for the local Frattini subgroup.

\begin{main}
If $G$ is a group and if $G = J \times R$, then $\Psi(G) = \Psi(J) \times \Psi(R)$.
\end{main}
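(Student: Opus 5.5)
We will prove that $\Psi(J\times R)=\Psi(J)\times\Psi(R)$ by computing each factor $\Phi(G_p)_G$ separately. Write $G=J\times R$ and fix a prime $p$. Choose Sylow $p$-subgroups $J_p$ of $J$ and $R_p$ of $R$; then $G_p=J_p\times R_p$ is a Sylow $p$-subgroup of $G$, and $\Phi(G_p)=\Phi(J_p)\times\Phi(R_p)$ by the direct product property of the Frattini subgroup recalled just before the statement. (If $p$ does not divide $|J|$, we simply take $J_p=1$, and similarly for $R$; this makes the product over $\pi(G)$ agree with the products over $\pi(J)$ and $\pi(R)$, since trivial factors contribute nothing.) The whole proof then reduces to the following core computation:
\[
(A\times B)_G = A_J\times B_R \qquad\text{for } A\leqslant J,\ B\leqslant R.
\]
Applying it with $A=\Phi(J_p)$ and $B=\Phi(R_p)$ and taking the product over all primes gives
\[
\Psi(G)=\prod_p \bigl(\Phi(J_p)_J\times\Phi(R_p)_R\bigr)=\Psi(J)\times\Psi(R),
\]
because $J$ and $R$ commute elementwise and intersect trivially.

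For the core computation, one inclusion is easy. The subgroup $A_J\times B_R$ is normal in $G$: $J$ normalizes $A_J$ and centralizes $B_R$, while $R$ does the reverse. Since it is also contained in $A\times B$, it lies in the core $(A\times B)_G$.

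For the reverse inclusion, set $N=(A\times B)_G$. Let $\pi_J\colon G\to J$ denote the projection, and note that the projection of $A\times B$ onto $J$ is $A$.
\begin{itemize}
\item Since $N$ is normal in $G$, the projection $\pi_J(N)$ is normal in $\pi_J(G)=J$.
\item Since $N\leqslant A\times B$, we have $\pi_J(N)\leqslant A$, and therefore $\pi_J(N)\leqslant A_J$.
\item Similarly, $\pi_R(N)\leqslant B_R$.
\end{itemize}
Hence $N\leqslant \pi_J(N)\times\pi_R(N)\leqslant A_J\times B_R$.

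The only point needing care is this projection argument: $N$ need not be a direct product of subgroups of $J$ and $R$ a priori, which is why we bound $N$ by the product of its projections rather than by $N\cap J$ and $N\cap R$. No other obstacle is expected.
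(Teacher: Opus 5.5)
Your proof is correct, and its skeleton is the paper's. You take $G_p=J_p\times R_p$, use $\Phi(J_p\times R_p)=\Phi(J_p)\times\Phi(R_p)$, reduce to the core identity $(A\times B)_G=A_J\times B_R$ (the paper's Lemma \ref{coresofcertaintypesofsubgroupsindirectproducts}), and multiply over primes.

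You differ only in how you prove that identity. The paper computes the core directly as an intersection of conjugates, $(AB)_G=\bigcap_{j\in J,\,r\in R}A^jB^r$. It then needs the auxiliary Lemma \ref{intersections}: in a direct product, an intersection of products $J_iR_i$ is the product of the intersections. That lemma is proved by induction, using uniqueness of the decomposition $g=jr$. You instead use the characterization of the core as the largest normal subgroup contained in $A\times B$. One inclusion comes from checking that $A_J\times B_R$ is normal. The other comes from pushing $N$ through the projection homomorphisms, whose images are normal in $J$ and $R$ and lie in $A$ and $B$.

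Your route dispenses with Lemma \ref{intersections} entirely and is shorter and more conceptual. The paper's route, once that lemma is available, gives the equality in a single computational chain rather than as two inclusions.

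Your parenthetical about taking $J_p=1$ for $p\notin\pi(J)$ makes explicit the reindexing from $\pi(G)$ to $\pi(J)$ and $\pi(R)$, which the paper performs silently. One tacit point, shared with the paper, is that $\Psi(G)$ may be computed with the particular Sylow subgroups $J_p\times R_p$. This is justified by the paper's lemma that $\Phi(P)_G$ does not depend on the choice of Sylow $p$-subgroup $P$.
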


Let $G$ be a group and let $J$ be a normal subgroup of $G$.
Assume that $J \leqslant \Phi(G)$.
Such a subgroup is usually called a \textit{Frattini normal subgroup} of $G$.
Now, suppose that $J \leqslant R \leqslant G$.
In \cite{rosefrattini}, John S. Rose found various conditions that would imply that $J \leqslant \Phi(R)$.
One of the main result in this direction is what we call Rose's Theorem and is listed in the next section as Theorem \ref{Rose'stheorem}.
We are able to add a few more results in this direction.

\begin{main}
Let $G$ be a group, let $p$ be a prime, and let $J$ be a normal $p$-subgroup of $G$ with $J \leqslant \Phi(G)$.
Let $S$ be a Sylow $p$-subgroup of $G$.
If $[S,J]$ is a normal subgroup of $G$, then $J \leqslant \Phi(S)$.
\end{main}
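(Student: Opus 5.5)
Let $K = [S,J]$, which is normal in $G$ by hypothesis and contained in $J$. The plan is to pass to $\bar G = G/K$, apply Rose's Theorem (Theorem \ref{Rose'stheorem}) there, and then pull the conclusion back to $G$. The point is that modulo $K$ the subgroup $J$ becomes central in the Sylow $p$-subgroup, which is exactly the situation Rose's Theorem handles.

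The verification in $\bar G$ goes as follows.
\begin{itemize}
\item $\bar J = JK/K = J/K$ is a normal $p$-subgroup of $\bar G$.
\item Since $J \leqslant \Phi(G)$ and the image of the Frattini subgroup lies in the Frattini subgroup of the quotient, we get $\bar J \leqslant \Phi(G)K/K \leqslant \Phi(\bar G)$.
\item $\bar S = SK/K = S/K$ is a Sylow $p$-subgroup of $\bar G$, because $K \leqslant J \leqslant S$ since $J$ is a normal $p$-subgroup.
\item Finally $[\bar S, \bar J] = [S,J]K/K = 1$, so $\bar J \leqslant \mathbf{Z}(\bar S)$.
\end{itemize}
Rose's Theorem then gives $\bar J \leqslant \Phi(\bar S)$, that is, $J/K \leqslant \Phi(S/K)$. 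Since $K \leqslant S$, we have $\Phi(S/K) = \Phi(S)K/K$ for $p$-groups, which is just $(S^p S')K/K$. Hence $J \leqslant \Phi(S)K$.

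It remains to show $K = [S,J] \leqslant \Phi(S)$, and this is the key (though easy) step. Since $J \leqslant S$, we have $[S,J] \leqslant [S,S] = S' \leqslant \Phi(S)$. So $J \leqslant \Phi(S)K = \Phi(S)$, as desired.

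The only real obstacle is making sure the hypotheses of Rose's Theorem transfer to the quotient. The one non-formal point is that Frattini normal subgroups map into the Frattini subgroup of a quotient, which holds because $\Phi(G)K/K \leqslant \Phi(G/K)$. Beyond that, the argument needs $K$ normal in $G$, not merely in $S$, so that the quotient $\bar G$ exists at all; this is precisely where the hypothesis that $[S,J]$ is normal in $G$ is used. It is also worth double-checking the exact form of Rose's Theorem as stated: it requires a normal $p$-subgroup inside $\Phi(\bar G)$ that is central in a Sylow $p$-subgroup, and all of these conditions were verified above.
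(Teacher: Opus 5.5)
Your proposal is correct and follows the paper's proof essentially step for step. Both arguments pass to $G/[S,J]$, where $J/[S,J]$ is a Frattini normal $p$-subgroup central in $S/[S,J]$, apply Rose's Theorem there, and pull the conclusion back using $[S,J] \leqslant S' \leqslant \Phi(S)$.
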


Notice that since $J$ is a normal subgroup of $G$, we actually have that $J \leqslant \Phi(S)_G \leqslant \Psi(G)$.

The next theorem asserts that if $J$ is a Frattini normal $p$-subgroup of $G$ that is contained in the hypercenter of $G$, then $J$ is also a Frattini normal subgroup of $S$.

\begin{main}
Let $G$ be a group, let $p$ be a prime, and let $J$ be a normal $p$-subgroup of $G$.
Let $S$ be a Sylow $p$-subgroup of $G$.
If $J \leqslant (\Phi(G) \cap \mathbf{Z}_\infty(G))$, then $J \leqslant \Phi(S)$.
\end{main}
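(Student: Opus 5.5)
The plan is to reduce this to the preceding theorem, which says that $J \leqslant \Phi(S)$ whenever $J$ is a Frattini normal $p$-subgroup and $[S,J]$ is normal in $G$. So I only need to show that the hypothesis $J \leqslant \mathbf{Z}_\infty(G)$ forces $[S,J] = [G,J]$. The latter is automatically normal in $G$, since both $G$ and $J$ are normal.

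The key step is to show that $G/\mathbf{C}_G(J)$ is a $p$-group, i.e. that every $p'$-element $x$ of $G$ centralizes $J$.
\begin{itemize}
\item Put $J_i = J \cap \mathbf{Z}_i(G)$. Since $J \leqslant \mathbf{Z}_\infty(G)$, this is a chain of normal subgroups of $G$ running from $1$ to $J$.
\item Each factor $J_{i+1}/J_i$ is centralized by $G$, because $[J_{i+1},G] \leqslant J \cap \mathbf{Z}_i(G) = J_i$.
\item Let $x$ be a $p'$-element of $G$. Then $\langle x\rangle$ acts coprimely on the $p$-group $J$ and stabilizes this normal series.
\item The standard coprime-action fact $\mathbf{C}_{J/N}(x) = \mathbf{C}_J(x)N/N$, for $x$-invariant normal $N$, applies at each stage. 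Inducting up the series then gives $\mathbf{C}_J(x) = J$.
\end{itemize}
Hence every $p'$-element lies in $\mathbf{C}_G(J)$. The image of $S$ in $G/\mathbf{C}_G(J)$ is a Sylow $p$-subgroup of that quotient, and the quotient is now a $p$-group, so $G = \mathbf{C}_G(J)S$. This is the step needing the most care, but it is standard: it is the usual fact that automorphisms stabilizing a normal series of a $p$-group form a $p$-group. Alternatively one can argue that $\mathbf{O}^p(G)$ centralizes $J$.

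Next, I compute $[G,J]$. Take $g = cs$ with $c \in \mathbf{C}_G(J)$ and $s \in S$, and take $j \in J$. The commutator identity gives $[cs,j] = [c,j]^s[s,j] = [s,j]$. Thus the generators of $[G,J]$ are exactly the commutators $[s,j]$ with $s \in S$ and $j\in J$, and so $[S,J] = [G,J]$. In particular $[S,J]$ is normal in $G$.

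Finally, $J$ is a normal $p$-subgroup of $G$ with $J \leqslant \Phi(G)$ and with $[S,J]$ normal in $G$. The preceding theorem therefore yields $J \leqslant \Phi(S)$, which completes the argument. As in the earlier remark, normality of $J$ then even gives $J \leqslant \Phi(S)_G \leqslant \Psi(G)$.
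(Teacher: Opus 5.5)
Your proof is correct, but it follows a different route from the paper's.

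The paper never shows that $[S,J]$ is normal. It sets $R=[G,J]$, which is automatically normal in $G$. It then invokes its earlier theorem $G'\cap\mathbf{Z}_\infty(G)\leqslant\Psi(G)$ to get $R\leqslant S\cap\Psi(G)=\Phi(S)_G\leqslant\Phi(S)$. Since only the trivial inclusion $[S,J]\leqslant[G,J]=R$ is needed, $J/R$ is central in $S/R$, and Rose's Theorem applied in $G/R$ finishes.

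You instead prove the equality $[S,J]=[G,J]$. You show $G=\mathbf{C}_G(J)S$ via the coprime-action (stability-group) argument on the series $J\cap\mathbf{Z}_i(G)$. This turns the hypercenter result into a direct corollary of the preceding $[S,J]$-normal theorem, where the needed inclusion $[S,J]\leqslant S'\leqslant\Phi(S)$ is automatic.

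What each route buys:
\begin{itemize}
\item Your argument bypasses the $G'\cap\mathbf{Z}_\infty(G)\leqslant\Psi(G)$ theorem and its inductive proof entirely. It also yields the extra structural fact that $\mathbf{O}^p(G)$ centralizes $J$. The price is importing a coprime-action lemma from outside the paper.
\item The paper's argument stays within its own earlier results and needs no coprime action, but it leans on that heavier intermediate theorem.
\end{itemize}
Both ultimately reduce to one application of Rose's Theorem to a quotient by a normal subgroup contained in $\Phi(S)$.
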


The notation is standard.
Any of the classic textbooks like \cite{isaacs}, \cite{rosetextbook}, or \cite{scott} provide more than adequate background.
Moreover, in an effort to make the article mostly self-contained, we have provided proofs of some of the basic known results in the next section.

\section{Preliminaries}

Our goal in this section is to gather up the basic tools needed to prove the results that were stated in the introduction.
We first record some of the most useful properties of the Frattini subgroup of a group.
We include the extremely short proofs.

\begin{lem}\label{throwawayFrattini}
Let $G$ be a group.
If $X$ is a non-empty subset of $G$ with $G = \langle \Phi(G), X\rangle$, then $G= \langle X\rangle$.
\end{lem}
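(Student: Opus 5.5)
The argument is by contradiction, using only the definition of $\Phi(G)$ as the intersection of the maximal subgroups of $G$. Set $H = \langle X \rangle$ and suppose $H \neq G$.

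Because $G$ is finite, the proper subgroup $H$ is contained in some maximal subgroup $M$ of $G$. To see this, start from $H$ and keep passing to a strictly larger proper subgroup while one exists. Each step increases the order, so the process stops, and it stops at a maximal subgroup $M \geqslant H$. Here we need $G \neq 1$. If $G = 1$, then $X = \{1\}$ and $\langle X \rangle = G$ trivially, so this case is harmless.

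Next, $\Phi(G) \leqslant M$ by the definition of $\Phi(G)$ as the intersection of all maximal subgroups of $G$. We also have $X \subseteq H \leqslant M$. Hence
\[
G = \langle \Phi(G), X \rangle \leqslant M,
\]
which contradicts $M$ being a proper subgroup of $G$. Therefore $H = G$, that is, $G = \langle X \rangle$.

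There is no real obstacle here. The only points needing care are the existence of a maximal subgroup containing a proper subgroup, which follows from finiteness, and the degenerate case $G = 1$, which is covered by the convention $\Phi(1) = 1$.
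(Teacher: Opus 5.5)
Your proof is correct and follows the paper's argument: assume $\langle X\rangle$ is proper, enclose it in a maximal subgroup $M$, note $\Phi(G)\leqslant M$, and conclude $G=\langle \Phi(G),X\rangle\leqslant M<G$. Your separate treatment of $G=1$ is harmless but unnecessary, since assuming $\langle X\rangle$ is a proper subgroup already forces $G\neq 1$.
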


\begin{proof}
If $\langle X\rangle$ is a proper subgroup of $G$, then there exists a maximal subgroup $M$ of $G$ with $\langle X\rangle \leqslant M$.
Of course, $\Phi(G) \leqslant M$.
Hence, $G = \langle \Phi(G), X\rangle \leqslant M < G$, a contradiction.
\end{proof}

\begin{lem}\label{normal,containedinafrattini}
Let $G$ be a group.
If $K$ is a normal subgroup of $G$ with $K \leqslant \Phi(H)$ for some subgroup $H$ of $G$, then $K \leqslant \Phi(G)$.
\end{lem}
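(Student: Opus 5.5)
The plan is to argue by contradiction, using the Frattini argument to pass from a maximal subgroup of $G$ to one of $H$. Suppose $K \not\leqslant \Phi(G)$. Then there is a maximal subgroup $M$ of $G$ with $K \not\leqslant M$. Since $K$ is normal in $G$, the product $KM$ is a subgroup of $G$ properly containing $M$. Maximality of $M$ then forces $G = KM$.

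The key step is to intersect this factorization with $H$. Since $K \leqslant \Phi(H) \leqslant H$, Dedekind's modular law gives
\[ H = H \cap KM = K(H \cap M). \]
Now $K \leqslant \Phi(H)$, so $H = \langle \Phi(H), H \cap M\rangle$. Applying Lemma \ref{throwawayFrattini} to the group $H$ with $X = H \cap M$ (which is non-empty, as it contains $1$) yields $H = H \cap M$, that is, $H \leqslant M$.

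It follows that $K \leqslant H \leqslant M$, contradicting the choice of $M$. Hence $K \leqslant \Phi(G)$.

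There is one degenerate case to handle. If $G = 1$ there are no maximal subgroups, but then $K = 1 = \Phi(G)$ and the claim is trivial. Likewise, if $H = 1$ then $K = 1$. In every nontrivial case the argument above applies directly.

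The only real subtlety is recognizing that the modular law is needed to pass from $G = KM$ to $H = K(H\cap M)$. This requires $K \leqslant H$, which holds because $K \leqslant \Phi(H)$.
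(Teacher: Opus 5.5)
Your proof is correct and is essentially the paper's: both take a maximal subgroup $M$ with $G = KM$, use Dedekind's rule to get $H = K(H\cap M)$, and apply Lemma \ref{throwawayFrattini} to force $H \leqslant M$. The one slip is terminological: the step you use is the non-generator property of $\Phi(H)$, not the ``Frattini argument,'' which usually means the factorization $G = K\mathbf{N}_G(P)$ for a Sylow subgroup $P$ of a normal subgroup $K$.
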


\begin{proof}
If  $K\not\leqslant \Phi(G)$, then $K \not\leqslant M$ for some maximal subgroup $M$ of $G$.
By the maximality of $M$ and the normality of $K$, we deduce that $G = KM$.
By Dedekind's Rule and Lemma \ref{throwawayFrattini}, \[H = H \cap KM = K(H \cap M) =\Phi(H)(H \cap M) = (H \cap M) \leqslant M.\]
Now, $K \leqslant H \leqslant M$, a contradiction.
\end{proof}

\begin{lem}\label{nilpotentimpliesfrattinirespectscontainment}
Let $G$ be a nilpotent group.
If $J \leqslant R \leqslant G$, then $\Phi(J) \leqslant \Phi(R)$.
In other words, in nilpotent groups, taking Frattini subgroups respects containments.
\end{lem}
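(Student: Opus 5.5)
The plan is to use the standard description of the Frattini subgroup of a nilpotent group: if $G$ is nilpotent, then $\Phi(G) = G'G^{\mathcal{P}}$, where $G^{\mathcal{P}}$ is generated by the $p$-th powers taken prime by prime. Equivalently, write $G = \prod_p G_p$ as the direct product of its Sylow subgroups. Then $\Phi(G) = \prod_p \Phi(G_p)$, and $\Phi(G_p) = G_p' G_p^p$ for each $p$. Once this description is in hand, monotonicity is immediate from $J \leqslant R$: we get $J' \leqslant R'$, and every $p$-th power of an element of $J$ is a $p$-th power of an element of $R$.

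To avoid quoting the Burnside basis theorem, I would argue from maximal subgroups instead. First I record that in a nilpotent group every maximal subgroup $M$ is normal of prime index. This follows because $M < \mathbf{N}_G(M)$ unless $M$ is normal, since normalizers grow in nilpotent groups. Then $G/M$ has no nontrivial proper subgroups, so it is cyclic of prime order.

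Now let $M$ be any maximal subgroup of $R$; the goal is to show $\Phi(J) \leqslant M$. The subgroup $R$ is nilpotent, being a subgroup of a nilpotent group, so $M \trianglelefteq R$ and $R/M$ has prime order. Consider $J \cap M$. If $J \leqslant M$, then $\Phi(J) \leqslant M$ trivially. Otherwise $JM = R$, and
\[ J/(J\cap M) \cong JM/M = R/M \]
has prime order. Then $J \cap M$ is a maximal subgroup of $J$, so $\Phi(J) \leqslant J \cap M \leqslant M$. Intersecting over all maximal subgroups $M$ of $R$ gives $\Phi(J) \leqslant \Phi(R)$.

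The degenerate case $R = 1$ is trivial, since then $J = 1$ and $\Phi(J) = 1$ by convention. The only step needing care is that maximal subgroups of nilpotent groups are normal of prime index. This is standard ("normalizers grow"), and I would cite it, or prove it quickly from the fact that a proper subgroup $H$ of a nilpotent group satisfies $H < \mathbf{N}_G(H)$ by induction along the upper central series.
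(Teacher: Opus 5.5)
Your proof is correct and is essentially the paper's argument: both fix a maximal subgroup $M$ of $R$, use that $M \trianglelefteq R$ because $R$ is nilpotent, and conclude $\Phi(J)\leqslant M$. The difference is cosmetic. The paper argues by contradiction, getting $R=\Phi(J)M$ and then $J=\Phi(J)(J\cap M)=J\cap M$ via Dedekind's Rule and Lemma~\ref{throwawayFrattini}, while you argue directly that $J\cap M$ is maximal in $J$ via $J/(J\cap M)\cong R/M$; your opening remark about $\Phi(G)=G'G^{\mathcal{P}}$ is not needed.
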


\begin{proof}
Assume that $\Phi(J) \not\leqslant \Phi(R)$.
Then, there exists a maximal subgroup $S$ of $R$ with $\Phi(J) \not\leqslant S$.
Because $R$ is nilpotent, we know that $S$ is normal in $R$.
We deduce that $R = \Phi(J)S$.
By Dedekind's Rule and Lemma \ref{throwawayFrattini}, \[ J = J\cap R = J \cap  \Phi(J)S= \Phi(J) (J \cap S) = (J \cap S) \leqslant S.\]
Now, $\Phi(J)\leqslant J\leqslant S$, a contradiction.
\end{proof}

Let $G$ be a group, and assume that $\Phi(J) \leqslant \Phi(R)$ whenever $J \leqslant R \leqslant G$.
Nilpotent groups are not the only types of groups that satisfy this condition.
Some structural results concerning the class of groups that satisfy this condition can be found in \cite{kirtland}.

Let $G$ be a group.
Recall the construction of the upper central series of $G$.
Let $\mathbf{Z}_0(G) = 1$, and, for $i>0$, define $\mathbf{Z}_i(G)$ recursively by equation \[ \mathbf{Z}_{i}(G)/\mathbf{Z}_{i-1}(G) = \mathbf{Z}(G/\mathbf{Z}_{i-1}(G)).\]
The ascending series \[ 1 = \mathbf{Z}_0(G) \leqslant \mathbf{Z}_1(G) \leqslant \cdots \] is called the \textit{upper central series} of $G$.
Since we are dealing with finite groups, we know that the upper central series cannot have infinitely many proper containments, and so the upper central series becomes stationary after finitely many term. 
The last term in the upper central series of a group $G$ is called the \textit{hypercenter} of $G$ and is usually denoted by $\mathbf{Z}_\infty(G)$.

A classic exercise is to prove that if $G$ is a group, then $G' \cap \mathbf{Z}(G) \leqslant \Phi(G)$.
See, for example, Exercse 603 in \cite{rosetextbook} or 7.3.26 in \cite{scott}.
It is well-known that this basic fact extends quite nicely.

\begin{lem}\label{G'intersecthypercenterinfrattini}
If $G$ is any group, then \[ G' \cap \mathbf{Z}_\infty(G) \leqslant \Phi(G).\]
\end{lem}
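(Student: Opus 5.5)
We argue by contradiction, using a maximal subgroup that avoids $G' \cap \mathbf{Z}_\infty(G)$. Write $Z = \mathbf{Z}_\infty(G)$. Suppose $G' \cap Z \not\leqslant \Phi(G)$. Then there is a maximal subgroup $M$ of $G$ with $G' \cap Z \not\leqslant M$. In particular $Z \not\leqslant M$, so we may choose the least index $i \geqslant 1$ with $\mathbf{Z}_i(G) \not\leqslant M$. Then $\mathbf{Z}_{i-1}(G) \leqslant M$.

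The key step is to show that $M$ is normal in $G$. Take $z \in \mathbf{Z}_i(G)$ and $m \in M$. Since $\mathbf{Z}_i(G)/\mathbf{Z}_{i-1}(G)$ is central in $G/\mathbf{Z}_{i-1}(G)$, the commutator $[z,m]$ lies in $\mathbf{Z}_{i-1}(G) \leqslant M$. Hence $m^z = m[m,z] \in M$, so $\mathbf{Z}_i(G)$ normalizes $M$. Because $\mathbf{Z}_i(G) \not\leqslant M$ and $M$ is maximal, $G = M\mathbf{Z}_i(G)$. Both $M$ and $\mathbf{Z}_i(G)$ normalize $M$, so $M \trianglelefteq G$.

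Now $M$ is a normal maximal subgroup, so $G/M$ has no proper nontrivial subgroups and is therefore cyclic of prime order. In particular $G/M$ is abelian, which gives $G' \leqslant M$ and hence $G' \cap Z \leqslant M$. This contradicts the choice of $M$. Therefore $G' \cap Z$ lies in every maximal subgroup of $G$, that is, $G' \cap \mathbf{Z}_\infty(G) \leqslant \Phi(G)$. If $G = 1$ the statement is trivial.

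The only delicate point is the normality of $M$. It relies on choosing $i$ minimal, so that the commutators $[\mathbf{Z}_i(G), M]$ fall into $\mathbf{Z}_{i-1}(G)$, which we know lies inside $M$.
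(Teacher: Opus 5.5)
Your proof is correct and is essentially the paper's argument: a term of the upper central series that is not contained in a maximal subgroup $M$ normalizes $M$, which forces $M$ to be normal of prime index and hence $G' \leqslant M$. The only difference is bookkeeping. You fix $M$ and take the least $i$ with $\mathbf{Z}_i(G) \not\leqslant M$, so that $[\mathbf{Z}_i(G), M] \leqslant \mathbf{Z}_{i-1}(G) \leqslant M$ directly, whereas the paper inducts on $i$ and uses the inductive hypothesis $G' \cap \mathbf{Z}_i(G) \leqslant \Phi(G) \leqslant M$ for the same step.
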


\begin{proof}
Let $J = \Phi(G)$ and, for each non-negative integer $i$, let $D_i = G' \cap \mathbf{Z}_i(G)$.
We shall prove that $D_i \leqslant J$ by induction on $i$.
If $i = 0$, then $D_i = G' \cap \mathbf{Z}_0(G)  = 1 \leqslant J$.

Assume inductively that $D_i \leqslant J$.
Let $M$ be a maximal subgroup of $G$.
If $\mathbf{Z}_{i+1}(G) \leqslant M$, then clearly $D_{i+1} \leqslant M$.
Assume that $\mathbf{Z}_{i+1}(G) \not\leqslant M$.
Then the normality of $\mathbf{Z}_{i+1}(G)$ and the maximality of $M$ imply that $G = \mathbf{Z}_{i+1}(G) M$.
Clearly $M \leqslant \mathbf{N}_G(M)$.
Note that $[M, \mathbf{Z}_{i+1}(G)] \leqslant [G, \mathbf{Z}_{i+1}(G)] \leqslant \mathbf{Z}_i(G)$.
Hence, \[[M, \mathbf{Z}_{i+1}(G)] \leqslant G' \cap \mathbf{Z}_i(G) = D_i \leqslant J \leqslant M.\]
So, $\mathbf{Z}_{i+1}(G) \leqslant \mathbf{N}_G(M)$.
Now, as $G = \mathbf{Z}_{i+1}(G) M$, we have that $M$ is normal in $G$.
By the maximality of $M$ in $G$, the factor group $G/M$ has precisely two subgroups, and so $G/M$ has prime order.
In particular, $G/M$ is abelian, and $D_{i+1} \leqslant G' \leqslant M$.
In all cases, $D_{i+1} \leqslant M$.
Since $M$ was an  arbitrary maximal subgroup of $G$, we deduce that $D_{i+1} \leqslant J$.
The induction argument goes through.
We conclude that $G' \cap \mathbf{Z}_\infty(G) \leqslant J$, as required.
\end{proof}

We need a few lemmas concerning direct products.
The next result appears as Exercise 628 in \cite{rosetextbook} and concerns the Frattini subgroup of a direct product.

\begin{lem}\label{frattinisubgroupofdirectproduct}
Let $G$ be a group and assume that $G = J \times R$.
Then $\Phi(G) = \Phi(J) \times \Phi(R)$.
\end{lem}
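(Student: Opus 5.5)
We prove the two inclusions $\Phi(J) \times \Phi(R) \leqslant \Phi(G)$ and $\Phi(G) \leqslant \Phi(J) \times \Phi(R)$ separately, using the description of $\Phi$ as an intersection of maximal subgroups.

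For $\Phi(J) \leqslant \Phi(G)$, note that $\Phi(J)$ is characteristic in $J$, and $J$ is normal in $G$, so $\Phi(J)$ is normal in $G$. Since $\Phi(J)$ is also contained in $\Phi(H)$ with $H = J$, Lemma \ref{normal,containedinafrattini} gives $\Phi(J) \leqslant \Phi(G)$. The same argument gives $\Phi(R) \leqslant \Phi(G)$. Because $\Phi(J) \leqslant J$ and $\Phi(R) \leqslant R$ with $J \cap R = 1$, their product is direct, and so $\Phi(J) \times \Phi(R) \leqslant \Phi(G)$.

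For the reverse inclusion, we exhibit enough maximal subgroups of $G$. If $M$ is a maximal subgroup of $J$, then $M \times R$ is a maximal subgroup of $G$: a subgroup strictly between them would contain $R$, hence have the form $(X \cap J) \times R$ by Dedekind's Rule, and its $J$-part would lie strictly between $M$ and $J$. Similarly, $J \times N$ is maximal in $G$ for each maximal subgroup $N$ of $R$. Intersecting these,
\[ \Phi(G) \leqslant \bigcap_M (M \times R) \cap \bigcap_N (J \times N) = (\Phi(J) \times R) \cap (J \times \Phi(R)) = \Phi(J) \times \Phi(R). \]
Here the last equality is computed coordinatewise in the direct product.

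The only step needing care is the degenerate case where $J$ (or $R$) is trivial and so has no maximal subgroups. With the convention $\Phi(1) = 1$ the statement is then trivial, so we may assume both factors are nontrivial. Apart from that, the main point to verify is the maximality of $M \times R$, which Dedekind's Rule handles directly.
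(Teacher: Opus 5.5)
The paper gives no proof of this lemma; it is quoted as Exercise 628 of Rose's textbook, so there is nothing to compare against. Your argument is correct and complete, and it is the standard one.

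For the inclusion $\Phi(J)\times\Phi(R) \leqslant \Phi(G)$, you apply Lemma \ref{normal,containedinafrattini} with $K = \Phi(J)$ and $H = J$. This is the same device the paper uses to get $\Phi(G_p)_G \leqslant \Phi(G)$.

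For the reverse inclusion you use only the maximal subgroups $M \times R$ and $J \times N$. That is legitimate: any family of maximal subgroups bounds $\Phi(G)$ from above. So you never need the other maximal subgroups of $G$, which do exist (for example, diagonal subgroups of $C_p \times C_p$). Your coordinatewise computation $\bigcap_M (M\times R) = \Phi(J)\times R$ is the special case of Lemma \ref{intersections} with every $R_i = R$.
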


The following observation is needed in the proof of Lemma \ref{coresofcertaintypesofsubgroupsindirectproducts}.

\begin{lem}\label{intersections}
Let $G$ be a group and assume that $G  = J \times R$ for subgroups $J$ and $R$ of $G$.
Let $J_1, \ldots, J_n \leqslant J$ and let $R_1,\ldots, R_n \leqslant R$.
Then, \[ \bigcap_{i=1}^n J_iR_i  = \left(\bigcap_{i=1}^n J_i\right)\left(\bigcap_{i=1}^n R_i\right).\]
\end{lem}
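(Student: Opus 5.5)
We prove the two containments separately. The right-hand side is contained in the left, because for each $i$ we have $J_iR_i \supseteq (\bigcap_k J_k)(\bigcap_k R_k)$. So the real content is the reverse containment. In both directions the only tool needed is the uniqueness of the decomposition $g = jr$ with $j \in J$ and $r \in R$, which holds since $G = J \times R$ means $J \cap R = 1$ and $J$, $R$ commute elementwise.

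For the reverse containment, take $g \in \bigcap_{i=1}^n J_iR_i$. Write $g = jr$ with $j\in J$ and $r \in R$; this representation is unique. For each $i$, the fact that $g \in J_iR_i$ gives elements $j_i \in J_i \leqslant J$ and $r_i \in R_i \leqslant R$ with $g = j_ir_i$. Uniqueness of the decomposition in $J \times R$ then forces $j_i = j$ and $r_i = r$. Hence $j \in J_i$ and $r \in R_i$ for every $i$. Therefore $j \in \bigcap_i J_i$, $r \in \bigcap_i R_i$, and $g = jr$ lies in the product $\left(\bigcap_i J_i\right)\left(\bigcap_i R_i\right)$.

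Each product $J_iR_i$, and the right-hand product, is in fact a subgroup, since $J$ and $R$ centralize each other. However, the argument above works at the level of sets, so this is not even needed. The only point requiring care is justifying uniqueness: if $j r = j' r'$, then $j'^{-1}j = r' r^{-1} \in J\cap R = 1$. I expect no genuine obstacle.
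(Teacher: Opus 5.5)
Your proof is correct and rests on the same key fact as the paper's, namely the uniqueness of the decomposition $g = jr$ with $j \in J$, $r \in R$, which follows from $J \cap R = 1$. The only difference is that the paper inducts on $n$ and applies uniqueness to two decompositions at each step, while you compare all $n$ decompositions of $g$ with its unique $J$--$R$ decomposition at once, so the induction is not needed.
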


\begin{proof}
We use induction on $n$.
The lemma is trivially true for $n = 1$.
Assume inductively the result for $n$.
Then, given subgroups $J_1, \ldots, J_n, J_{n+1} \leqslant J$ and $R_1, \ldots, R_n, R_{n+1} \leqslant R$, we have by our induction hypothesis that \[ \bigcap_{i=1}^{n+1} J_iR_i = \left(\bigcap_{i=1}^n J_iR_i\right) \cap J_{n+1} R_{n+1} = \left(\bigcap_{i=1}^n J_i\right)\left(\bigcap_{i=1}^n R_i\right)  \cap J_{n+1}R_{n+1}.\]
The containment \[  \left(\bigcap_{i=1}^{n+1} J_i\right)\left(\bigcap_{i=1}^{n+1} R_i\right) \leqslant   \left(\bigcap_{i=1}^n J_i\right)\left(\bigcap_{i=1}^n R_i\right)  \cap J_{n+1}R_{n+1}\] is clear.
Let $g \in   \left(\bigcap_{i=1}^n J_i\right)\left(\bigcap_{i=1}^n R_i\right)  \cap J_{n+1}R_{n+1}$.
Now, $g = j_0 r_0 = j_1 r_1$ for $j_0 \in \left(\bigcap_{i=1}^n J_i\right)$, $r_0 \in  \left(\bigcap_{i=1}^n R_i\right)$, $j_1 \in J_{n+1}$, and $r_1 \in R_{n+1}$.
Hence, $j_1^{-1}j_0 = r_1 r_0^{-1} \in J \cap R = 1$.
So, $j_0 = j_1 \in  \left(\bigcap_{i=1}^{n+1} J_i\right)$ and $r_0= r_1 \in \left(\bigcap_{i=1}^{n+1} R_i\right) $.
Indeed, the reverse containment holds too.
\end{proof}

We need the following lemma concerning cores of certain subgroups of direct products.

\begin{lem}\label{coresofcertaintypesofsubgroupsindirectproducts}
Let $G$ be a group and assume that $G = J \times R$ for subgroups $J$ and $R$ of $G$.
Let $V \leqslant J$ and let $W \leqslant R$.
Then \[ (VW)_G = V_J W_R.\]
\end{lem}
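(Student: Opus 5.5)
We prove $(VW)_G = V_J W_R$ by showing the two inclusions separately, using the description of the core as the intersection of conjugates. Since $G = J \times R$, every element $g \in G$ can be written uniquely as $g = jr$ with $j \in J$ and $r \in R$. Elements of $R$ commute with elements of $J$, so they centralize $V$; likewise elements of $J$ centralize $W$. Hence
\[ (VW)^g = V^{jr}W^{jr} = V^j W^r, \]
where $V^j \leqslant J$ and $W^r \leqslant R$.

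\textbf{The inclusion $V_J W_R \leqslant (VW)_G$.} First, $V_J$ is normal in $J$ and is centralized by $R$, so $V_J$ is normal in $G$. Similarly $W_R$ is normal in $G$. Therefore $V_JW_R$ is a normal subgroup of $G$, and it is clearly contained in $VW$. Since the core is the largest normal subgroup of $G$ contained in $VW$, we get $V_J W_R \leqslant (VW)_G$.

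\textbf{The inclusion $(VW)_G \leqslant V_J W_R$.} By the computation above,
\[ (VW)_G = \bigcap_{j \in J,\, r \in R} V^j W^r. \]
This is a finite intersection of subgroups of the form $J_iR_i$ with $J_i \leqslant J$ and $R_i \leqslant R$, indexed by the pairs $(j,r)$. We apply Lemma \ref{intersections} to it. Indexing over all pairs is harmless, because a repeated $V^j$ or $W^r$ does not change either intersection. This gives
\[ (VW)_G = \left(\bigcap_{j\in J} V^j\right)\left(\bigcap_{r\in R} W^r\right) = V_J W_R. \]
This last step in fact yields equality directly, so it already proves the whole statement.

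The only point that needs care is the bookkeeping when applying Lemma \ref{intersections}. We enumerate the pairs $(j,r)$ as indices $1,\dots,n$ and set $J_i = V^{j}$ and $R_i = W^{r}$ for the $i$-th pair. The intersections over the $n$ indices are then exactly $\bigcap_{j \in J} V^j = V_J$ and $\bigcap_{r\in R} W^r = W_R$.
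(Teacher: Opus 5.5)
Your proof is correct and matches the paper's argument: you reduce $(VW)^{jr}$ to $V^jW^r$ and then apply Lemma \ref{intersections} to the intersection over all pairs $(j,r)$. The separate normality argument for $V_JW_R \leqslant (VW)_G$ is also correct, but, as you note yourself, it is redundant once that lemma gives equality.
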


\begin{proof}
First, note that $[V,W] \leqslant [J,R] =1$, and so the product set $VW$ indeed forms a subgroup of $G$.
We compute that
\begin{align*}
(VW)_G = \bigcap_{g \in G} (VW)^g 
            = \bigcap_{\substack{j\in J \\ r\in R}} (VW)^{jr} 
             &= \bigcap_{\substack{j\in J \\ r\in R}} V^{jr} W^{jr} \\
             &=\bigcap_{\substack{j\in J \\ r\in R}} V^j W^r \\
            &\overset{(*)}{=}\left( \bigcap_{j \in J} V^j\right)\left( \bigcap_{r \in R} W^r\right)\\
            &= V_J W_R,
\end{align*}
where $(*)$ holds by Lemma \ref{intersections}.
\end{proof}

We now state Rose's Theorem, which appears as Theorem 5 in \cite{rosefrattini}.
As far as we can tell, two proofs of this result exist the literature.
See Theorem 5 in \cite{rosefrattini} and Lemma 14 in \cite{adequatefieldextensions}.
The length and difficulty of the proofs are about the same.
The only ``sophisticated'' result used in these proofs is a classical splitting result due to Gasch\"utz, which we now state here in the prose.
Let $J$ be an abelian normal subgroup of $G$, and let $J \leqslant R \leqslant G$.
Assume that $|J|$ and $|G:R|$ are coprime.
Then, $G$ splits over $J$ if and only if $R$ splits over $J$.
Of course, if $G$ splits over $J$, then a direct application of Dedekind's Rule yields that $R$ splits over $J$.
So, the real content here is that if $R$ splits over $J$, then so does $G$.
This result and its proof can be found in \cite{scott} as Theorem 9.3.5 and in \cite{kurz} as Theorem 3.3.2.

\begin{thrm}[Rose's Theorem]\label{Rose'stheorem}
Let $G$ be a group and let $p$ be a prime.
Let $J$ be a Frattini normal subgroup of $G$ and assume that $J$ is a $p$-group.
Let $S$ be a Sylow $p$-subgroup of $G$.
If $J \leqslant \mathbf{Z}(S)$, then $J \leqslant \Phi(S)$.
\end{thrm}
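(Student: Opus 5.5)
The plan is to push $J$ down, modulo a suitable normal subgroup of $G$, until the Sylow subgroup splits over it, then use Gasch\"utz's theorem to split $G$ and contradict $J \leqslant \Phi(G)$. The candidate subgroup to factor out is
\[ D = J \cap \Phi(S). \]
The argument would show three things: $D$ is normal in $G$; $J/D$ has a complement in $S/D$; and consequently $J = D \leqslant \Phi(S)$.

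\emph{Step 1: $D \trianglelefteq G$.} This is the step I expect to need the real idea, since $\Phi(S)$ is not normal in $G$ and normality of $D$ has to come from somewhere.
\begin{itemize}
\item Let $g \in G$. Since $J$ is normal, $D^g = J \cap \Phi(S^g)$. Also $J = J^g \leqslant \mathbf{Z}(S^g)$.
\item Hence both $S$ and $S^g$ lie in $C = \mathbf{C}_G(J)$, and both are Sylow $p$-subgroups of $C$.
\item By Sylow's theorem in $C$, we have $S^g = S^c$ for some $c \in C$.
\item Therefore $D^g = J \cap \Phi(S)^c = (J \cap \Phi(S))^c = D$, where the middle equality uses $J^c = J$ and the last uses that $c$ centralizes $J$.
\end{itemize}
So $D$ is normal in $G$.

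\emph{Step 2: pass to $\bar G = G/D$.}
\begin{itemize}
\item Because $D \leqslant \Phi(S)$, we have $\Phi(\bar S) = \Phi(S)/D$.
\item $\bar J$ is a normal $p$-subgroup of $\bar G$ with $\bar J \leqslant \Phi(G)D/D \leqslant \Phi(\bar G)$, since images of Frattini subgroups lie in the Frattini subgroup of the quotient.
\item $\bar J \leqslant \mathbf{Z}(\bar S)$.
\item $\bar J \cap \Phi(\bar S) = (J \cap \Phi(S))/D = 1$.
\item $J$ is abelian, so $J^p \leqslant S^p \leqslant \Phi(S)$ gives $J^p \leqslant D$. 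Thus $\bar J$ is elementary abelian.
\end{itemize}

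\emph{Step 3: $\bar S$ splits over $\bar J$.}
\begin{itemize}
\item $\bar S/\Phi(\bar S)$ is elementary abelian, so view it as an $\mathbb{F}_p$-vector space.
\item The image of $\bar J$ in it is a subspace isomorphic to $\bar J$, because $\bar J \cap \Phi(\bar S) = 1$.
\item Choose a complementary subspace, with preimage $T$ satisfying $\Phi(\bar S) \leqslant T \leqslant \bar S$.
\item Then $\bar S = \bar J T$, and $\bar J \cap T \leqslant \bar J \cap \Phi(\bar S) = 1$.
\end{itemize}

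\emph{Step 4: apply Gasch\"utz.} Take the abelian normal subgroup $\bar J$ and $R = \bar S$; here $|\bar G : \bar S|$ is prime to $p$.
\begin{itemize}
\item Gasch\"utz's theorem gives a complement $H$ to $\bar J$ in $\bar G$.
\item We have $\bar J \leqslant \Phi(\bar G)$. So if $\bar J \neq 1$, then $H$ is proper and $\bar G = \bar J H = \Phi(\bar G) H$.
\item Lemma \ref{throwawayFrattini} then forces $\bar G = H$, a contradiction.
\item Hence $\bar J = 1$, that is, $J = D \leqslant \Phi(S)$.
\end{itemize}

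Once the normality of $D$ in Step 1 is established via the Sylow argument inside $\mathbf{C}_G(J)$, the remaining steps are routine.
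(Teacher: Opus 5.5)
Your proof is correct. The Sylow argument inside $\mathbf{C}_G(J)$ (equivalently, the Frattini argument $G=\mathbf{C}_G(J)\mathbf{N}_G(S)$) makes $D=J\cap\Phi(S)$ normal in $G$, and the complement to $\bar J$ in $\bar S$ then lifts by Gasch\"utz's theorem to a complement in $\bar G$, which contradicts $\bar J\leqslant\Phi(\bar G)$ unless $J=D$.

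The paper gives no proof of its own. It cites Rose (Theorem 5) and Kaplan--Lev--Mann--Solomon (Lemma 14), noting that Gasch\"utz's splitting theorem is the only nontrivial ingredient, which is exactly the route you take.
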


\section{Properites of the local Frattini subgroup}

We start by verifying that the construction of the local Frattini subgroup of a group does not depend the Sylow subgroups chosen.
This amounts to showing that whenever $P$ and $S$ are Sylow $p$-subgroup of a group $G$, the equality $\Phi(P)_G = \Phi(S)_G$ holds.

\begin{lem}
Let $G$ be a group and let $p$ be a prime.
If $P$ and $S$ are Sylow $p$-subgroups of $G$, then $\Phi(P)_G = \Phi(S)_G$.
\end{lem}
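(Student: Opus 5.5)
By Sylow's Theorem, $P$ and $S$ are conjugate in $G$, so there is some $g \in G$ with $P = S^g$. The plan is to show that conjugation carries $\Phi(S)$ onto $\Phi(P)$, and then to use that the core of a subgroup is the same as the core of any of its conjugates.

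First, conjugation by $g$ is an isomorphism $S \to S^g = P$. An isomorphism carries the maximal subgroups of $S$ bijectively onto the maximal subgroups of $P$. It therefore carries their intersection onto the corresponding intersection, which gives $\Phi(S)^g = \Phi(S^g) = \Phi(P)$. The degenerate case $S = 1$ is covered by the convention $\Phi(1) = 1$.

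Second, we use the description $J_G = \bigcap_{x \in G} J^x$ from the introduction. For any subgroup $J$ of $G$, as $x$ runs over $G$ so does $gx$. Hence the family of conjugates of $J^g$ is exactly the family of conjugates of $J$, and
\[ (J^g)_G = \bigcap_{x\in G} J^{gx} = \bigcap_{y \in G} J^{y} = J_G. \]
Alternatively, $J_G$ is a normal subgroup of $G$ contained in $J$, so $J_G = (J_G)^g \leqslant J^g$, which gives $J_G \leqslant (J^g)_G$; the reverse inclusion follows by symmetry using $g^{-1}$.

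Applying this with $J = \Phi(S)$ gives $\Phi(P)_G = (\Phi(S)^g)_G = \Phi(S)_G$. No step is a real obstacle; the only point needing a moment's care is the first, that Frattini subgroups are preserved under isomorphism, which follows directly from the definition.
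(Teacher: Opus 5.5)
Your proof is correct, and it takes a more direct route than the paper's. The paper never uses $\Phi(S)^g = \Phi(P)$ or the conjugation-invariance of cores. Instead it argues by contradiction:
\begin{itemize}
\item $\Phi(P)_G$ is a normal $p$-subgroup of $G$, so it lies in $S$.
\item If $\Phi(P)_G \not\leqslant \Phi(S)$, there is a maximal subgroup $R$ of $S$ with $S = \Phi(P)_G R$.
\item Conjugating by $g$ with $P = S^g$ gives $P = \Phi(P)R^g$, so Lemma~\ref{throwawayFrattini} forces $P = R^g$, contradicting $|R| < |S|$.
\end{itemize}
This yields $\Phi(P)_G \leqslant \Phi(S)$, hence $\Phi(P)_G \leqslant \Phi(S)_G$ by normality, and symmetry gives equality.

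Your transport-of-structure argument is shorter and more conceptual. It needs no contradiction and uses nothing about $P$ and $S$ beyond their being conjugate. It also proves the stronger fact that $\Phi(P)$ and $\Phi(S)$ are themselves conjugate. Finally, it works verbatim for any isomorphism-invariant subgroup operator in place of $\Phi$.

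The paper's argument instead stays inside the ``throw away the Frattini subgroup'' technique of Lemma~\ref{throwawayFrattini}, in the same style as Lemma~\ref{normal,containedinafrattini}. What it really shows is that any normal subgroup of $G$ contained in $\Phi(P)$ is contained in $\Phi(S)$. That is equivalent to the core statement itself, so the longer route buys nothing beyond stylistic consistency with the surrounding lemmas.
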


\begin{proof}
Because $\Phi(P)_G$ is a normal $p$-subgroup of $G$, we know that $\Phi(P)_G \leqslant S$.
Assume that $\Phi(P)_G \not\leqslant \Phi(S)$.
Then, there exists a maximal subgroup $R$ of $S$ with $\Phi(P)_G \not\leqslant R$.
Hence, $S = \Phi(P)_GR$.
By Sylow's Theorem, $P = S^g$ for some $g \in G$.
Now, $P = S^g = (\Phi(P)_G R)^g = \Phi(P)_G R^g = \Phi(P) R^g = R^g$ by Lemma \ref{throwawayFrattini}.
So, $|P| = |R^g| = |R| < |S|$, a contradiction.
We deduce that $\Phi(P)_G \leqslant \Phi(S)$ and, since $\Phi(P)_G$ is normal in $G$, we actually have that $\Phi(P)_G \leqslant \Phi(S)_G$.
Reverse the roles of $P$ and $S$ in this argument to see that the reverse containment holds as well.
\end{proof}

\begin{thrm}
Let $G$ be a group.
If $J$ is a normal subgroup of $G$ with $J \leqslant \Psi(G)$, then $J$ is a stictly Frattini subgroup of $G$.
\end{thrm}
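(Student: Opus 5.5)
The property to be established is the one intended in the definition of strictly Frattini: for every subgroup $X$ of $G$ we must show $\pi(G:X) = \pi(G:JX)$. Since $X \leqslant JX$, the index $|G:JX|$ divides $|G:X|$, so $\pi(G:JX) \subseteq \pi(G:X)$ is automatic. The plan is therefore to prove the reverse inclusion by contradiction: fix a prime $p$ with $p \nmid |G:JX|$, and show that $p \nmid |G:X|$.

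\textbf{Step 1: describe the structure of $J$.} For each $q \in \pi(G)$, the subgroup $\Phi(G_q)_G$ is a normal $q$-subgroup of $G$. Hence $\Psi(G)$ is the direct product of these subgroups, it is nilpotent, and its Sylow $q$-subgroup is exactly $\Phi(G_q)_G$. The normal subgroup $J \leqslant \Psi(G)$ is then nilpotent as well, so $J = J_p \times J_{p'}$, where $J_p$ is its Sylow $p$-subgroup and $J_{p'}$ is its Hall $p'$-subgroup. Both factors are characteristic in $J$, hence normal in $G$. Moreover $J_p \leqslant \Phi(G_p)_G$. By the previous lemma (independence of the choice of Sylow subgroup), this gives $J_p \leqslant \Phi(S)$ for every Sylow $p$-subgroup $S$ of $G$.

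\textbf{Step 2: strip off the $p'$-part of $J$.} Put $Y = J_pX$, which is a subgroup because $J_p$ is normal. Then $JX = J_{p'}Y$ with $J_{p'}$ normal, so $|JX:Y|$ divides $|J_{p'}|$ and is prime to $p$. Since $p \nmid |G:JX|$ by assumption, it follows that $p \nmid |G:Y|$. Consequently $Y$ contains a Sylow $p$-subgroup $S$ of $G$.

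\textbf{Step 3: strip off $J_p$ using the Frattini property.} The subgroup $J_p$ is a normal $p$-subgroup of $G$, so $J_p \leqslant S$. Dedekind's Rule applied to $S \leqslant J_pX$ gives
\[ S = S \cap J_pX = J_p(S \cap X). \]
By Step 1 we have $J_p \leqslant \Phi(S)$, so $S = \langle \Phi(S), S\cap X\rangle$. Lemma \ref{throwawayFrattini} then yields $S = S \cap X$, that is, $S \leqslant X$. Hence $p \nmid |G:X|$, which gives the reverse inclusion $\pi(G:X) \subseteq \pi(G:JX)$.

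The main obstacle is Step 2. The Frattini argument in Step 3 applies only to the $p$-part of $J$, so the $p'$-part must be removed first. The normality of $J_{p'}$, combined with a coprimality count on indices, handles this cleanly.
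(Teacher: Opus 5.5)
Your proof is correct and follows essentially the same route as the paper. Both arguments use nilpotency of $J$ to isolate its normal Sylow $p$-subgroup, build a Sylow $p$-subgroup of $G$ from that $p$-part together with $X$, and then discard the $p$-part of $J$ because it lies in the Frattini subgroup of that Sylow subgroup. The only difference is presentational: the paper forms $PY$, with $P$ the Sylow $p$-subgroup of $J$ and $Y$ a Sylow $p$-subgroup of $X$, and simply asserts that $PY$ is Sylow in $JX$, whereas your index count with $J_{p'}$ and your Dedekind step make that part of the argument explicit.
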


\begin{proof}
Let $X$ be an arbitrary subgroup of $G$.
Let $\pi_1 = \pi(G:X)$ and let $\pi_2 = \pi(G: JX)$.
We need to prove that $\pi_1 = \pi_2$.
Since $|G : JX|$ divides $|G : X|$, the containment $\pi_2 \subseteq \pi_1$ certainly holds.

Suppose that $\pi_1 \not\subseteq \pi_2$ and choose $p \in \pi_1 \setminus \pi_2$.
Let $P$ be a Sylow $p$-subgroup of $J$.
Because $J$ is contained in $\Psi(G)$, we know that $J$ is nilpotent.
Hence, $P$ is a characteristic subgroup of $J$.
So, as $J$ is a normal in $G$, we deduce that $P$ is a normal subgroup of $G$.
Now, let $Y$ be a Sylow $p$-subgroup of $X$.
Note that $PY$ is a subgroup of $G$, as $P$ is normal in $G$.
In fact, notice that $PY$ is a Sylow $p$-subgroup of $JX$.
Since $p \not\in \pi_2$, we deduce that $PY$ is a Sylow $p$-subgroup of $G$.

Now, as $PY$ is a Sylow $p$-subgroup of $G$, we know that $\Phi(PY)_G$ is the unique Sylow $p$-subgroup of $\Psi(G)$.
As $P \leqslant J \leqslant \Psi(G)$, we deduce that $P \leqslant \Phi(PY)_G \leqslant \Phi(PY)$ by Sylow's Theorem.
Hence, $PY = \Phi(PY)Y = Y$ by Lemma \ref{throwawayFrattini}.
So $X$ contains a full Sylow $p$-subgroup of $G$, contrary to the fact that $p \in \pi_1$.
Hence, $\pi_1 \subseteq \pi_2$.
Combining our containments, we conlude that $\pi_1 = \pi_2$, as wanted.
\end{proof}

In particular, notice that for each Sylow subgroup $S$ of $G$, the subgroup $\Phi(S)_G$ is a strictly Frattini subgroup of $G$.

In Section 2, we saw that for each group $G$, the containment $G' \cap \mathbf{Z}_\infty(G) \leqslant \Phi(G)$ holds.
We now show that the intersection $G' \cap \mathbf{Z}_\infty(G)$ lies deeper in $\Phi(G)$.

\begin{thrm}\label{G'intersecthypercenter}
If $G$ is any group, then \[ G' \cap \mathbf{Z}_{\infty}(G) \leqslant \Psi(G).\]
\end{thrm}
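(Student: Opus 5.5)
We will show that for each prime $p$ the Sylow $p$-subgroup of $D = G' \cap \mathbf{Z}_\infty(G)$ lies in $\Phi(S)_G$, where $S$ is a Sylow $p$-subgroup of $G$. Since $D$ is then the product of its Sylow subgroups, this gives $D \leqslant \Psi(G)$. The two ingredients are induction on $|G|$, applied by factoring out $\Phi(S)_G$, and Rose's Theorem (Theorem \ref{Rose'stheorem}) for the base situation.

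\emph{Reduction to one prime.} By Lemma \ref{G'intersecthypercenterinfrattini}, $D \leqslant \Phi(G)$, so $D$ is nilpotent. Hence its Sylow $p$-subgroup $D_p$ is characteristic in the normal subgroup $D$, and so $D_p$ is normal in $G$. Then $D = \prod_p D_p$, and it suffices to prove $D_p \leqslant \Phi(S)$ for a Sylow $p$-subgroup $S$ of $G$. Indeed, normality of $D_p$ then gives $D_p \leqslant \Phi(S)_G \leqslant \Psi(G)$.

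\emph{Induction step.} Fix $p$ and argue by induction on $|G|$. Let $N = \Phi(S)_G$ and suppose first that $N \neq 1$, and put $\bar G = G/N$. Then $\bar S = S/N$ is a Sylow $p$-subgroup of $\bar G$. We have $\bar D \leqslant \bar G' \cap \mathbf{Z}_\infty(\bar G)$, because images of upper central terms lie in the corresponding upper central terms of the quotient (by induction on $i$). Also $\bar D_p = D_pN/N$ is a normal $p$-subgroup of $\bar G$ contained in this intersection. Note that the inductive hypothesis is applied to the Sylow $p$-subgroup of $\bar G'\cap \mathbf{Z}_\infty(\bar G)$, and $D_pN/N$ lies in it, since a normal $p$-subgroup lies in the Sylow $p$-subgroup of any normal nilpotent subgroup containing it. Induction then gives $D_pN/N \leqslant \Phi(S/N) = \Phi(S)/N$, where the equality uses $N \leqslant \Phi(S)$. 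Hence $D_p \leqslant \Phi(S)$.

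\emph{The case $N = 1$.} Now suppose $N = \Phi(S)_G = 1$; we show $D_p = 1$. Suppose instead that $D_p \neq 1$, and let $Z$ be a minimal normal subgroup of $G$ contained in $D_p$. Since $Z \leqslant \mathbf{Z}_\infty(G)$, we have $Z \cap \mathbf{Z}(G) \neq 1$. To see this, take the least $i$ with $Z \cap \mathbf{Z}_i(G) \neq 1$; then $[Z \cap \mathbf{Z}_i(G), G] \leqslant Z \cap \mathbf{Z}_{i-1}(G) = 1$. Minimality of $Z$ then forces $Z \leqslant \mathbf{Z}(G)$, so in particular $Z \leqslant \mathbf{Z}(S)$. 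Moreover $Z \leqslant D \leqslant \Phi(G)$, so $Z$ is a Frattini normal $p$-subgroup of $G$. Rose's Theorem now yields $Z \leqslant \Phi(S)$, and since $Z$ is normal, $Z \leqslant \Phi(S)_G = 1$. This is a contradiction, so $D_p = 1$.

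The main obstacle is making sure Rose's Theorem is applicable, which requires $J \leqslant \mathbf{Z}(S)$. This is why the argument first reduces to the case $\Phi(S)_G = 1$: there, a minimal normal subgroup inside the hypercenter is forced to be central in $G$. The remaining care is the compatibility of the quotient with $G'$, $\mathbf{Z}_\infty$, and $\Phi(S)$, which is routine.
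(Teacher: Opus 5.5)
Your proof is correct, but it is organized differently from the paper's. The paper fixes $G$ and inducts on $i$ to show $D_i = G'\cap \mathbf{Z}_i(G) \leqslant \Psi(G)$. For a Sylow $p$-subgroup $P$ of $D_{i+1}$ it puts $R=[G,P]$ and observes $R \leqslant G'\cap\mathbf{Z}_i(G)=D_i\leqslant\Psi(G)$, so $R\leqslant\Phi(S)_G$. It then applies Rose's Theorem in $G/R$ to $P/R$, which is a Frattini normal $p$-subgroup central in $G/R$ (the printed proof writes $J/R$ there, but $P/R$ is what the argument needs).

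You instead induct on $|G|$ for a fixed prime and quotient by all of $N=\Phi(S)_G$ at once. This costs you the routine check that $DN/N\leqslant (G/N)'\cap\mathbf{Z}_\infty(G/N)$. In return you get a clean base case $\Phi(S)_G=1$: a minimal normal subgroup of $G$ inside $D_p$ is forced into $\mathbf{Z}(G)$, and Rose's Theorem gives an immediate contradiction. You never need information about lower layers of the upper central series inside the same group. The paper's version, in exchange, stays inside a single group and yields the layered statement $D_i\leqslant\Psi(G)$ for every $i$. In both routes Rose's Theorem is only ever applied to a subgroup that is central in the whole (quotient) group.
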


\begin{proof}
Let $J = \Psi(G)$ and let $Z_i  =  \mathbf{Z}_i(G)$ for each non-negative integer $i$.
Let $D_i = G' \cap Z_i$ for each non-negative integer $i$.
We shall show that $D_i \leqslant J$ by induction on $i$.
If $i = 0$, then $D_i = 1 \leqslant  J$.

Assume inductively that $D_i \leqslant J$.
Let $p$ be a prime and let $P$ be a Sylow $p$-subgroup of $D_{i+1}$.
Because $D_{i+1} \leqslant Z_{i+1}$, we know that $D_{i+1}$ is nilpotent.
Hence, $P$ is a characteristic subgroup of $D_{i+1}$.
Since $D_{i+1}$ is normal in $G$, we deduce that $P$ is a normal subgroup of $G$.
Note that $D_{i+1} \leqslant \Phi(G)$ by Lemma \ref{G'intersecthypercenterinfrattini}, and so $P \leqslant \Phi(G)$.
In particular, $P$ is a Frattini normal subgroup of $G$.

Let $R = [G,P]$.
Note that $R \leqslant Z_i$ as $P \leqslant Z_{i+1}$.
So, $R \leqslant D_i \leqslant J$ by our induction hypothesis.
Let $S$ be a Sylow $p$-subgroup of $G$.
Hence, $\Phi(S)_G$ is the Sylow $p$-subgroup of $J$.
Now, $R \leqslant \Phi(S)_G \leqslant \Phi(S)$.
Also, note that $R$ is a normal subgroup of $G$.
(See, for example, Exercise 179 in \cite{rosetextbook}.)
By Lemma \ref{normal,containedinafrattini}, $R \leqslant \Phi(G)$.

Consider $G/R$.
Note that $J/R$ is a normal subgroup of $G$, and note also that $J/R \leqslant \Phi(G)R/R = \Phi(G/R)$.
Hence, $J/R$ is a Frattini normal subgroup of $G/R$.
The subgroup $S/R$ is a Sylow $p$-subgroup of $G/R$.
Since $[S/R, J/R] =[S,J]R/R \leqslant R/R$, we have that $J/R \leqslant \mathbf{Z}(S/R)$.
By Rose's Theorem (Theorem \ref{Rose'stheorem}), $J/R \leqslant \Phi(S/R)$.
As $R \leqslant \Phi(S)$, we have that $\Phi(S/R) = \Phi(S)/R$, and so $P \leqslant \Phi(S)$.
Now, since $P$ is a normal subgroup of $G$, we actually have that $P \leqslant \Phi(S)_G \leqslant J$.
Since $p$ was an arbitrary prime, this argument shows that each Sylow subgroup of $D_{i+1}$ is a subgroup of $J$.
Consequently, $D_{i+1} \leqslant J$.
The induction argument goes through.
In particular, $G' \cap \mathbf{Z}_{\infty}(G) \leqslant J$, as required.
\end{proof}

We now prove that $\mathbf{F}(G)/\Psi(G)$ is abelian for any group $G$.

\begin{thrm}\label{F/Psiabelian}
If $G$ is any group, then the section $\mathbf{F}(G)/\Psi(G)$ is abelian.
\end{thrm}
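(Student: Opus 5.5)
The plan is to show directly that $\mathbf{F}(G)' \leqslant \Psi(G)$. Then, since $\Psi(G)$ is normal in $G$ and contained in $\mathbf{F}(G)$, the quotient $\mathbf{F}(G)/\Psi(G)$ is abelian. (Here $\Psi(G)\leqslant \Phi(G) \leqslant \mathbf{F}(G)$; alternatively, each $\Phi(G_p)_G$ is a normal $p$-subgroup, so $\Psi(G)\leqslant \mathbf{F}(G)$.)

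First I would reduce to a single prime. Since $\mathbf{F}(G)$ is nilpotent, it is the direct product of its Sylow subgroups $O_p(G)$, for $p \in \pi(\mathbf{F}(G))$. The derived subgroup of a direct product is the product of the derived subgroups of the factors, so
\[ \mathbf{F}(G)' = \prod_p O_p(G)'. \]
It therefore suffices to show $O_p(G)' \leqslant \Psi(G)$ for each prime $p$.

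Now fix $p$ and a Sylow $p$-subgroup $S$ of $G$. Because $O_p(G)$ is a normal $p$-subgroup, $O_p(G) \leqslant S$. Every $p$-group has abelian Frattini quotient, so $O_p(G)' \leqslant \Phi(O_p(G))$. Since $S$ is nilpotent, Lemma \ref{nilpotentimpliesfrattinirespectscontainment} gives $\Phi(O_p(G)) \leqslant \Phi(S)$, hence $O_p(G)' \leqslant \Phi(S)$. Moreover, $O_p(G)'$ is characteristic in the normal subgroup $O_p(G)$, so it is normal in $G$. A normal subgroup of $G$ contained in $\Phi(S)$ lies in the core, so
\[ O_p(G)' \leqslant \Phi(S)_G \leqslant \Psi(G). \]
Taking the product over all $p$ gives $\mathbf{F}(G)' \leqslant \Psi(G)$, which completes the argument.

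I do not expect a genuine obstacle. The only point needing care is the application of Lemma \ref{nilpotentimpliesfrattinirespectscontainment}, which requires the ambient group to be nilpotent. That is why I would work inside the Sylow subgroup $S$, rather than invoking Lemma \ref{normal,containedinafrattini}, which would only yield containment in $\Phi(G)$.
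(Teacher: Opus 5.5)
Your proof is correct and is essentially the paper's argument: the key step $O_p(G)' \leqslant \Phi(O_p(G)) \leqslant \Phi(S)$ via Lemma \ref{nilpotentimpliesfrattinirespectscontainment}, followed by normality of $O_p(G)'$ in $G$ to land in $\Phi(S)_G \leqslant \Psi(G)$, is exactly what the paper does. The only cosmetic difference is the final assembly over primes: you use $\mathbf{F}(G)' = \prod_p O_p(G)'$ directly, whereas the paper passes to $\mathbf{F}(G)/\Psi(G)$ and observes that it is nilpotent with abelian Sylow subgroups.
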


\begin{proof}
Let $F = \mathbf{F}(G)$ and let $J = \Psi(G)$.
Let $p$ be a prime and let $P$ be a Sylow $p$-subgroup of $F$.
Of course, as $F$ is nilpotent, we know that $P$ is a characteristic subgroup of $F$.
Since $F$ is normal in $G$, we deduce that $P$ is a normal subgroup of $G$.
Now, since $P'$ is a characteristic subgroup of $P$ and $P$ is normal in $G$, the subgroup $P'$ is normal in $G$.

Let $S$ be a Sylow $p$-subgroup of $G$.
Note that $P' \leqslant S$.
Because $P$ is nilpotent, $P' \leqslant \Phi(P)$.
Since $S$ is nilpotent, $\Phi(P) \leqslant \Phi(S)$ by Lemma \ref{nilpotentimpliesfrattinirespectscontainment}.
So, $P' \leqslant \Phi(S)$.
In fact, since $P'$ is normal in $G$, we have that $P' \leqslant \Phi(S)_G \leqslant J$.

Now, $PJ/J$ is a Sylow $p$-subgroup of $F/J$.
Also, we have that $[PJ/J, PJ/J]=P'J/J = J/J$.
Hence, $PJ/J$ is abelian.
Since $p$ was arbitrary, the factor group $F/J$ has abelian Sylow subgroups.
The group $F/J$ is, therefore, nilpotent with abelian Sylow subgroups.
Hence, $F/J$ is abelian, and $F' \leqslant J$.
Indeed, the group $F/J$ is abelian.
\end{proof}

\begin{cor}
If $G$ is any group, then the section $\mathbf{F}(G)/\Phi(G)$ is abelian.
\end{cor}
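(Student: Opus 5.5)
The corollary follows by a one-line comparison with Theorem \ref{F/Psiabelian}. Set $F = \mathbf{F}(G)$. The introduction and Lemma \ref{normal,containedinafrattini} give $\Psi(G) \leqslant \Phi(G)$. Indeed, each $\Phi(G_p)_G$ is a normal subgroup of $G$ contained in $\Phi(G_p)$, so by that lemma $\Phi(G_p)_G \leqslant \Phi(G)$. Hence the product $\Psi(G)$ of these subgroups lies in $\Phi(G)$.

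Next I need $\Phi(G) \leqslant F$, so that the quotient $F/\Phi(G)$ makes sense. This is the standard fact that the Frattini subgroup of a finite group is nilpotent. I would prove it with the Frattini argument. Let $Q$ be a Sylow $q$-subgroup of $\Phi(G)$. Since $\Phi(G)$ is normal in $G$, the Frattini argument gives $G = \Phi(G)\mathbf{N}_G(Q)$. Lemma \ref{throwawayFrattini} then forces $G = \mathbf{N}_G(Q)$, so $Q$ is normal in $G$. Thus every Sylow subgroup of $\Phi(G)$ is normal, $\Phi(G)$ is nilpotent, and being normal it lies in $F$.

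We now have $\Psi(G) \leqslant \Phi(G) \leqslant F$, with both subgroups normal in $G$. Theorem \ref{F/Psiabelian} gives $F' \leqslant \Psi(G)$, hence $F' \leqslant \Phi(G)$. Therefore $F/\Phi(G)$ is abelian; equivalently, it is a quotient of the abelian group $F/\Psi(G)$.

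The only step that is not immediate from earlier results is the containment $\Phi(G) \leqslant \mathbf{F}(G)$, and it is the short Frattini-argument proof above.
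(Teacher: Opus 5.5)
Your proof is correct and is essentially the paper's argument: both obtain $\mathbf{F}(G)' \leqslant \Psi(G) \leqslant \Phi(G)$ from Theorem \ref{F/Psiabelian}. The only addition is that you verify $\Psi(G) \leqslant \Phi(G)$ and, via the Frattini argument, $\Phi(G) \leqslant \mathbf{F}(G)$, which the paper uses without comment.
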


\begin{proof}
Note that $\mathbf{F}(G)' \leqslant \Psi(G)$ by Theorem \ref{F/Psiabelian}.
Hence $\mathbf{F}(G)' \leqslant \Phi(G)$, and so $\mathbf{F}(G)/\Phi(G)$ is abelian.
\end{proof}

As mentioned, if $G = J \times R$, then $\Phi(G) = \Phi(J) \times \Phi(R)$.
We now prove the analogous result for the local Frattini subgroup.

\begin{thrm}
Let $G$ be a group and let $G = J\times R$ for subgroups $J$ and $R$ of $G$.
Then $\Psi(G) = \Psi(J) \times \Phi(R)$.
\end{thrm}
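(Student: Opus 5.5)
As stated, the theorem has a typo: the intended conclusion, matching the corresponding statement in the introduction, is $\Psi(G) = \Psi(J) \times \Psi(R)$. That is what I plan to prove. The strategy is to compute $\Psi(G)$ one prime at a time, using Sylow subgroups adapted to the direct decomposition. Then the two earlier lemmas on direct products (Lemma \ref{frattinisubgroupofdirectproduct} and Lemma \ref{coresofcertaintypesofsubgroupsindirectproducts}) split each factor $\Phi(G_p)_G$.

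Fix a prime $p \in \pi(G)$. Choose a Sylow $p$-subgroup $S_J$ of $J$ and a Sylow $p$-subgroup $S_R$ of $R$; if $p \nmid |J|$ then $S_J = 1$, and similarly for $R$. Since $[J,R]=1$ and $J \cap R = 1$, the product $S = S_J S_R = S_J \times S_R$ is a subgroup of order $|J|_p|R|_p = |G|_p$, hence a Sylow $p$-subgroup of $G$. By the lemma showing that $\Phi(P)_G$ does not depend on the choice of Sylow subgroup, we may use this $S$ in the definition of $\Psi(G)$.

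Applying Lemma \ref{frattinisubgroupofdirectproduct} to the group $S = S_J \times S_R$ gives $\Phi(S) = \Phi(S_J)\Phi(S_R)$ with $\Phi(S_J) \leqslant J$ and $\Phi(S_R) \leqslant R$. Lemma \ref{coresofcertaintypesofsubgroupsindirectproducts}, applied with $V = \Phi(S_J)$ and $W = \Phi(S_R)$, then gives
\[ \Phi(S)_G = \Phi(S_J)_J\, \Phi(S_R)_R. \]
When $p \notin \pi(J)$ the first factor is $\Phi(1)_J = 1$, using the convention $\Phi(1)=1$; the analogous statement holds for $R$.

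Finally, take the product over $p \in \pi(G) = \pi(J) \cup \pi(R)$. Every subgroup of $J$ commutes elementwise with every subgroup of $R$, so the product can be regrouped as
\[ \Psi(G) = \Bigl(\prod_{p} \Phi(S_J)_J\Bigr)\Bigl(\prod_{p} \Phi(S_R)_R\Bigr). \]
Primes outside $\pi(J)$ contribute trivially to the first product, so it equals $\Psi(J)$; likewise the second equals $\Psi(R)$. Since $\Psi(J) \leqslant J$, $\Psi(R) \leqslant R$ and $J \cap R = 1$, the product is direct.

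The only delicate points are bookkeeping. One must check that $S_J S_R$ is really Sylow in $G$, and that primes dividing only one of $|J|,|R|$ are handled correctly through the trivial Frattini subgroup. Neither should cause real difficulty.
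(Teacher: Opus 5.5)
Your proof is correct and follows the paper's own argument: take $S_p = J_p \times R_p$, split $\Phi(S_p)$ by Lemma~\ref{frattinisubgroupofdirectproduct}, split its core by Lemma~\ref{coresofcertaintypesofsubgroupsindirectproducts}, and regroup the product over primes (your treatment of primes dividing only one of $|J|,|R|$ spells out a point the paper leaves implicit). You are also right that $\Phi(R)$ in the statement is a typo, since the paper's proof itself concludes $\Psi(G) = \Psi(J)\times\Psi(R)$, as in the introduction.
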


\begin{proof}
Let $p$ be a prime.
Let $J_p$ be a Sylow $p$-subgroup of $J$ and let $R_p$ be a Sylow $p$-subgroup of $R$.
Note that $S_p = J_p R_p = J_p \times R_p$ is a Sylow $p$-subgroup of $G$.
By Lemma \ref{frattinisubgroupofdirectproduct}, $\Phi(S_p) = \Phi(J_p) \times \Phi(R_p)$.
Now, by Lemma \ref{coresofcertaintypesofsubgroupsindirectproducts}, $\Phi(S_p)_G = (\Phi(J_p) \Phi(R_p))_G = \Phi(J_p)_J\Phi(R_p)_R$.
Since $p$ was an arbitrary prime, we see 
\begin{align*}
\Psi(G) = \prod_{p \in \pi(G)} \Phi(S_p)_G &= \prod_{p\in \pi(G)} \Phi(J_p)_J \Phi(R_p)_R\\ &=\left(\prod_{p \in \pi(J)} \Phi(J_p)_J\right)\left(\prod_{p \in \pi(R)} \Phi(R_p)_R\right) \\
&= \Psi(J) \times \Psi(R),
\end{align*}
as wanted.
\end{proof}

\section{More variations on Rose's Theorem}

We recall the situation of Rose's Theorem.
Let $G$ be a group and let $p$ be a prime.
Let $J$ be a normal $p$-subgroup of $G$ with $J \leqslant \Phi(G)$.
Let $S$ be a Sylow $p$-subgroup of $G$.
J.S. Rose prove that if $J$ is a central subgroup of $S$, then, in fact, $J$ is a Frattini normal subgroup of $S$.
As mentioned above, more conditions that would imply $J \leqslant \Phi(S)$ were proved in \cite{adequatefieldextensions}.
For example, the authors prove that if $J$ is cyclic or if the subgroup $J \cap S'$ happens to be normal in $G$, then $J \leqslant \Phi(S)$.

We first prove that if a certain commutator subgroup is normal in the whole group $G$, then $J \leqslant \Phi(S)$.

\begin{thrm}
Let $G$ be a group and let $p$ be a prime.
Let $J$ be a normal $p$-subgroup of $G$ with $J \leqslant \Phi(G)$.
Let $S$ be a Sylow $p$-subgroup of $G$.
If the commutator subgroup $[S,J]$ is normal in $G$, then $J \leqslant \Phi(S)$.
So, in particular, $J \leqslant \Phi(S)_G \leqslant \Psi(G)$.
\end{thrm}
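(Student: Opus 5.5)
Let $R = [S,J]$. I will reduce to Rose's Theorem by passing to $G/R$, mirroring the proof of Theorem \ref{G'intersecthypercenter}. The crucial preliminary step is to show that $R \leqslant \Phi(S)$. Since $J$ is a normal $p$-subgroup of $G$, we have $J \leqslant S$, and so $R = [S,J] \leqslant [S,S] = S'$. Because $S$ is a $p$-group, hence nilpotent, $S' \leqslant \Phi(S)$. Therefore $R \leqslant \Phi(S)$. By hypothesis $R$ is normal in $G$, so Lemma \ref{normal,containedinafrattini} gives $R \leqslant \Phi(G)$. (We also have $R \leqslant J$, since $J$ is normal in $G$.)

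Next I pass to $\overline{G} = G/R$. Since $R \leqslant \Phi(G)$, we have $\Phi(G/R) = \Phi(G)/R$, which is the standard fact used in the same way in the proof of Theorem \ref{G'intersecthypercenter}. So $J/R \leqslant \Phi(G)/R = \Phi(G/R)$. Moreover $J/R$ is a normal $p$-subgroup of $G/R$, and $S/R$ is a Sylow $p$-subgroup of $G/R$. Since
\[ [S/R, J/R] = [S,J]R/R = R/R, \]
we get $J/R \leqslant \mathbf{Z}(S/R)$. Rose's Theorem (Theorem \ref{Rose'stheorem}) now yields $J/R \leqslant \Phi(S/R)$.

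Finally, since $R \leqslant \Phi(S)$ and $R$ is normal in $S$, we have $\Phi(S/R) = \Phi(S)/R$. Hence $J \leqslant \Phi(S)$. As $J$ is normal in $G$, this gives $J \leqslant \Phi(S)_G$, which is the Sylow $p$-subgroup of $\Psi(G)$, and so $J \leqslant \Psi(G)$.

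I do not expect a serious obstacle here. The only point needing care is the observation $[S,J] \leqslant S' \leqslant \Phi(S)$. This is exactly what lets the normality hypothesis on $[S,J]$ feed into Lemma \ref{normal,containedinafrattini}, and it ensures that the Frattini subgroups behave well under the quotient by $R$. Once that is in place, the rest is a direct invocation of Rose's Theorem.
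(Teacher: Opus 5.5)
Your proof is correct and follows essentially the same route as the paper: quotient by $R=[S,J]$, apply Rose's Theorem in $G/R$, and lift back using $R \leqslant S' \leqslant \Phi(S)$. The only difference is that you invoke Lemma \ref{normal,containedinafrattini} to get $R \leqslant \Phi(G)$ and hence $\Phi(G/R)=\Phi(G)/R$. That step is valid but unnecessary, since the paper uses the always-true inclusion $\Phi(G)R/R \leqslant \Phi(G/R)$.
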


\begin{proof}
Let $R = [S,J]$.
Then, by our hypothesis, $R$ is a normal subgroup of $G$.
Of course, $R \leqslant J$, as $J$ is normal in $G$ (and, therefore, $S$).
Consider the group $G/R$.
Note that $J/R$ is a normal $p$-subgroup of $G/R$ with $J/R \leqslant \Phi(G)R/R \leqslant \Phi(G/R)$.
Also, by a basic commutator identity, $[S/R, J/R]=[S,J]R/R= R/R$.
Hence, $J/R \leqslant \mathbf{Z}(S/R)$.
By Rose's Theorem applied in $G/R$, we deduce that $J/R \leqslant \Phi(S/R)$.
Note that $R \leqslant S'$.
Since $S$ is nilpotent, $S' \leqslant \Phi(S)$.
Hence, $\Phi(S/R) = \Phi(S)/R$ and $J\leqslant \Phi(S)$, as wanted.
\end{proof}

The next theorem shows that if $J$ is a Frattini normal $p$-subgroup of $G$ that happens to be contained in the hypercenter of $G$, then $J$ is contained in $\Phi(S)$ for any Sylow $p$-subgroup $S$ of $G$.

\begin{thrm}
Let $G$ be a group and let $p$ be a prime.
Let $J$ be a normal $p$-subgroup of $G$ with $J \leqslant \Phi(G)$.
Let $S$ be a Sylow $p$-subgroup of $G$.
If $J \leqslant \mathbf{Z}_\infty(G)$, then $J \leqslant \Phi(S)$.
So, in particular, $J \leqslant \Phi(S)_G \leqslant \Psi(G)$.
\end{thrm}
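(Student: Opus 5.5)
Let $J$ be a normal $p$-subgroup of $G$ with $J \leqslant \Phi(G) \cap \mathbf{Z}_\infty(G)$, and let $S$ be a Sylow $p$-subgroup of $G$. The plan is to induct on $|J|$ and reduce to the commutator version proved just above: if $[S,J]$ is normal in $G$, then $J \leqslant \Phi(S)$. A more convenient normal subgroup to work with is $R = [G,J]$, which is normal in $G$ because both $G$ and $J$ are (the same fact cited in the proof of Theorem \ref{G'intersecthypercenter}). Since $J$ is normal, $R \leqslant J$, so $R$ is a normal $p$-subgroup with $R \leqslant \Phi(G) \cap \mathbf{Z}_\infty(G)$.

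First, I claim $R < J$ when $J \neq 1$. Choose the least $i$ with $J \leqslant \mathbf{Z}_i(G)$; then $i \geqslant 1$ and $R = [G,J] \leqslant \mathbf{Z}_{i-1}(G)$. If $R = J$, then $J \leqslant \mathbf{Z}_{i-1}(G)$, contradicting minimality. The same conclusion also follows from the standard fact that a nontrivial normal subgroup meeting the hypercenter meets $\mathbf{Z}(G)$, which forces $[G,J] < J$. Induction on $|J|$ then gives $R \leqslant \Phi(S)$, with the case $J = 1$ trivial.

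Next, pass to $\bar G = G/R$. Here $J/R$ is a normal $p$-subgroup with $J/R \leqslant \Phi(G)R/R \leqslant \Phi(G/R)$. Moreover $[G/R, J/R] = 1$, so $J/R$ is central in $G/R$, and in particular $J/R \leqslant \mathbf{Z}(S/R)$. Rose's Theorem (Theorem \ref{Rose'stheorem}) therefore yields $J/R \leqslant \Phi(S/R)$. Since $R \leqslant \Phi(S)$ and $R \trianglelefteq S$, we have $\Phi(S/R) = \Phi(S)/R$, and so $J \leqslant \Phi(S)$. Normality of $J$ in $G$ then gives $J \leqslant \Phi(S)_G \leqslant \Psi(G)$.

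The step I expect to need the most care is making the induction apply to $R$: it must be checked that $R$ still satisfies every hypothesis, namely that it is normal, a $p$-group, lies in $\Phi(G)$, and lies in $\mathbf{Z}_\infty(G)$. All four follow from $R \leqslant J$ together with the normality of $R$. The identity $\Phi(S/R) = \Phi(S)/R$ also requires $R \leqslant \Phi(S)$, and that is exactly what the inductive hypothesis provides. Alternatively, one could avoid induction by running along the upper central series, as in the proof of Theorem \ref{G'intersecthypercenter}, but induction on $|J|$ is cleaner.
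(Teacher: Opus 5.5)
Your proof is correct. Its final step is exactly the paper's: pass to $G/R$ with $R=[G,J]$, observe that $J/R$ is a central Frattini normal $p$-subgroup there, apply Rose's Theorem, and lift back via $\Phi(S/R)=\Phi(S)/R$.

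You differ in how you obtain $R\leqslant \Phi(S)$. The paper notes $R\leqslant G'\cap \mathbf{Z}_\infty(G)$ and invokes Theorem \ref{G'intersecthypercenter} to get $R\leqslant S\cap\Psi(G)=\Phi(S)_G$. That theorem is itself proved by induction along the upper central series, and it needs Lemma \ref{G'intersecthypercenterinfrattini} to know that the layers $G'\cap\mathbf{Z}_{i}(G)$ are Frattini.

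You instead induct on $|J|$. The inclusion $[G,\mathbf{Z}_i(G)]\leqslant \mathbf{Z}_{i-1}(G)$ gives $R<J$, and $R$ inherits every hypothesis from $J$; in particular $R\leqslant\Phi(G)$ comes for free from $R\leqslant J$. So your argument is self-contained: it uses only Rose's Theorem, and neither Lemma \ref{G'intersecthypercenterinfrattini} nor Theorem \ref{G'intersecthypercenter}.

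The paper's version is a short corollary of a result it has already established. It also shows that $[G,J]\leqslant\Psi(G)$ for every normal $J\leqslant\mathbf{Z}_\infty(G)$, with no Frattini hypothesis on $J$. Unrolled, your induction is the same mechanism as the proof of Theorem \ref{G'intersecthypercenter}. It runs down the chain $J\geqslant[G,J]\geqslant[G,[G,J]]\geqslant\cdots$ rather than up the upper central series, with the Frattini property assumed rather than derived.

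Two minor points, neither of which affects correctness, since your minimal-$i$ argument already establishes $R<J$:
\begin{enumerate}
\item Your opening sentence promises a reduction to the $[S,J]$-normal theorem, but you never use it.
\item The aside that $J\cap\mathbf{Z}(G)\neq 1$ forces $[G,J]<J$ is not immediate as stated. You would have to apply the fact in $G/K$ for a $G$-chief factor $J/K$.
\end{enumerate}
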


\begin{proof}
Let $H = \mathbf{Z}_\infty(G)$.
Let $R = [G, J]$, and note that $R$ is a normal subgroup of $\langle G, J \rangle = G$.
Now, $R \leqslant (G' \cap H) \leqslant \Psi(G)$ by Theorem \ref{G'intersecthypercenter}.
Since $R$ is a subgroup of $J$, we know that $R$ is a $p$-subgroup of $G$.
So, $R \leqslant (S \cap \Psi(G)) = \Phi(S)_G \leqslant \Phi(S)$.
Observe that $J/R$ is a normal subgroup of $G/R$ and that $J/R \leqslant \Phi(G)R/R \leqslant \Phi(G/R)$.
Also $[S/R, J/R] =[S,J]R/R = R/R$, and so $J/R \leqslant \mathbf{Z}(S/R)$.
By Rose's Theorem, $J/R \leqslant \Phi(S/R)$.
Now, since $R \leqslant \Phi(S)$, we know that $\Phi(S/R)  = \Phi(S)/R$.
Hence, $J \leqslant \Phi(S)$, as wanted.
\end{proof} 

We can now state the known results in the following omnibus theorem, which includes Rose's Theorem, our theorems above, and Theorem 9 in \cite{adequatefieldextensions}.
In order to make the statement cleaner, we let $\mathfrak{G}$ be the group constructed in Example 9 in \cite{rosefrattini}, which is a particular split extension of an elementary abelian $3$-group of order $3^4$ by $SL(2,3)$.

\begin{thrm}
Let $G$ be a group and let $p$ be a prime.
Let $J$ be a normal $p$-subgroup of $G$ with $J \leqslant \Phi(G)$.
Let $S$ be a Sylow $p$-subgroup of $G$.
\begin{enumerate}
\item If $J \leqslant \mathbf{Z}(S)$, then $J \leqslant \Phi(S)$.
\item If $J$ is cyclic, then $J \leqslant \Phi(S)$.
\item If the subgroup $J \cap S'$ is normal in $G$, then $J \leqslant \Phi(S)$.
\item If the subgroup $[S,J]$ is normal in $G$, then $J \leqslant \Phi(S)$.
\item If $J \leqslant \mathbf{Z}_\infty(G)$, then $J \leqslant \Phi(S)$.
\item If $G$ is $p$-solvable of $p$-length $1$, then $J \leqslant \Phi(S)$.
\item If $G$ is $p$-solvable and each subgroup of $S$ can be generated by at most four elements and if $p =3$ and $\mathfrak{G}$ is not involved in $G$, then $J \leqslant \Phi(S)$.
\item If $S$ is non-cyclic with $|S : \Phi(S)| = p^2$ and if either (a) $p = \min(\pi(G))$, (b) $S$ is generated by the elements of order $p$, or (c) $G$ is $p$-solvable, then $J \leqslant \Phi(S)$.
\end{enumerate}
\end{thrm}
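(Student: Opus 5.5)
This omnibus theorem is essentially a collation, so the proof will be a matter of pointing each item to its source rather than a new argument. Item (1) is Rose's Theorem (Theorem \ref{Rose'stheorem}), quoted verbatim. Item (4) is the first theorem of this section, which concerns the case that $[S,J]$ is normal in $G$. Item (5) is the second theorem of this section, which covers $J \leqslant \mathbf{Z}_\infty(G)$; that result in turn rests on Theorem \ref{G'intersecthypercenter} together with Rose's Theorem.

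The remaining items, (2), (3), (6), (7) and (8), are the conditions established by Kaplan, Lev, Mann, and Solomon. I would cite them as Theorem 9 (and its surrounding lemmas) in \cite{adequatefieldextensions}, taking care that the hypotheses match the ones recorded there. The hypotheses to check are the following:
\begin{itemize}
\item in (7), $p$-solvability, the bound of four generators on subgroups of $S$, and the exclusion of the group $\mathfrak{G}$ of Example 9 in \cite{rosefrattini} when $p=3$;
\item in (8), the three alternative hypotheses (a), (b), (c), each combined with $S$ non-cyclic and $|S:\Phi(S)|=p^2$.
\end{itemize}

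If a self-contained argument were wanted for some of these items, I would reduce each to Rose's Theorem by the same quotient trick used in this section. The aim is to find a normal subgroup $R \trianglelefteq G$ with $[S,J] \leqslant R \leqslant \Phi(S)$; then Rose's Theorem can be applied in $G/R$ and the conclusion lifted using $\Phi(S/R)=\Phi(S)/R$.
\begin{itemize}
\item For (3), I would take $R = J\cap S'$. Since $J \trianglelefteq S$, we have $[S,J]\leqslant J\cap S'$, and $S'\leqslant \Phi(S)$, so $R$ has both required properties.
\item For (2), a cyclic $J$ has every subgroup characteristic, so $R=[S,J]$ is automatically normal in $G$, and item (4) applies directly.
\item For (6), I would use the structure $G=O_{p'}(G)\,S\,O_{p'}(G)$-style factorization coming from $p$-length $1$. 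The aim is to show $[S,J]$ is normal in $G$; the natural route is to use that $O_{p',p}(G)$ contains $S$ normally modulo $O_{p'}(G)$.
\end{itemize}
The main obstacle is (6)–(8): these genuinely require the deeper arguments of \cite{adequatefieldextensions}, including Gaschütz-type splitting and case analysis on small $|S:\Phi(S)|$. For those items I would simply invoke the cited results rather than reprove them.
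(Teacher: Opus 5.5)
Your plan matches the paper, which gives no separate proof. Item (1) is Rose's Theorem, (4) and (5) are the two theorems of this section, and the remaining items are cited from Kaplan--Lev--Mann--Solomon.

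Your optional self-contained arguments go beyond the paper, and the ones for (2) and (3) are correct. Your principle is exactly what drives the paper's proofs of (4) and (5): a subgroup $R\trianglelefteq G$ with $[S,J]\leqslant R\leqslant\Phi(S)$ forces $J\leqslant\Phi(S)$, by applying Rose's Theorem in $G/R$. Your derivations then go through. For (3), take $R=J\cap S'$. For (2), $[S,J]\leqslant J$ is characteristic in the cyclic group $J$, hence normal in $G$, and (4) applies. So these two items, which the paper simply attributes to Kaplan--Lev--Mann--Solomon, are formal consequences of Rose's Theorem.

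Your sketch for (6) is garbled as written. The product $O_{p'}(G)\,S\,O_{p'}(G)$ is just $O_{p',p}(G)$, which is not $G$ in general. The idea can still be completed in a few lines, so you are too pessimistic in grouping (6) with (7) and (8):
\begin{enumerate}
\item $p$-length $1$ means $G/O_{p',p}(G)$ is a $p'$-group, so $S\leqslant O_{p',p}(G)=O_{p'}(G)S$.
\item The Frattini argument then gives $G=O_{p'}(G)\mathbf{N}_G(S)$.
\item The factor $\mathbf{N}_G(S)$ normalizes $[S,J]$.
\item Since $[O_{p'}(G),J]\leqslant O_{p'}(G)\cap J=1$, the factor $O_{p'}(G)$ centralizes $[S,J]\leqslant J$.
\end{enumerate}
Hence $[S,J]\trianglelefteq G$, and (6) follows from (4). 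Only (7) and (8) genuinely need the cited results.
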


\end{document}